\theoremstyle{plain}
\newtheorem{proposition}{Proposition}[section] 
\newtheorem{theorem}[proposition]{Theorem}
\newtheorem{lemma}[proposition]{Lemma}  
\newtheorem{corollary}[proposition]{Corollary}
\theoremstyle{definition}
\newtheorem{conjecture}[proposition]{Conjecture}
\theoremstyle{remark}
\newtheorem{remark}[proposition]{Remark}
\DeclareMathOperator{\Isom}{\mathsf{Isom}}
\DeclareMathOperator{\Vol}{Vol}
\DeclareMathOperator{\supp}{supp}
\DeclareMathOperator{\id}{id}
\DeclareMathOperator{\rank}{rank}
\DeclareMathOperator{\dist}{d}
\DeclareMathOperator{\Cc}{\mathcal{C}}
\DeclareMathOperator{\Fc}{\mathcal{F}}
\DeclareMathOperator{\Hc}{\mathcal{H}}
\DeclareMathOperator{\Oc}{\mathcal{O}}
\DeclareMathOperator{\Hb}{\mathbb{H}}
\DeclareMathOperator{\Nb}{\mathbb{N}}
\DeclareMathOperator{\Rb}{\mathbb{R}}
\DeclareMathOperator{\Gsf}{\mathsf{G}}
\DeclareMathOperator{\Psf}{\mathsf{P}}
\DeclareMathOperator{\SL}{\mathsf{SL}}
\newcommand{\abs}[1]{\left|#1\right|}
\newcommand{\norm}[1]{\left\|#1\right\|}
\newcommand{\ip}[1]{\left\langle #1\right\rangle}
\newcommand{\Ga}{\Gamma}
\newcommand{\ga}{\gamma}
\newcommand{\F}{\mathcal{F}}
\newcommand{\La}{\Lambda}
\newcommand{\Msf}{\mathsf{M}}
\newcommand{\Asf}{\mathsf{A}}
\newcommand{\Nsf}{\mathsf{N}}
\newcommand{\Ksf}{\mathsf{K}}
\newcommand{\Usf}{\mathsf{U}}
\newcommand{\fa}{\mathfrak{a}}
\newcommand{\mfg}{\mathfrak{g}}
\newcommand{\mfa}{\mathfrak{a}}
\newcommand{\R}{\mathbb{R}}
\newcommand{\ba}{\backslash}
\newcommand{\Leb}{\operatorname{Leb}}
\newcommand{\BMS}{\operatorname{BMS}}
\begin{document}

\title[The singularity conjecture and infinite covolume]{{A note on the singularity conjecture for infinite covolume discrete subgroups} }

\author[Kim]{Dongryul M. Kim}
\email{dongryul.kim@yale.edu}
\address{Department of Mathematics, Yale University, USA}

\author[Zimmer]{Andrew Zimmer}
\email{amzimmer2@wisc.edu}
\address{Department of Mathematics, University of Wisconsin-Madison, USA}

\date{\today}

 \keywords{}
 \subjclass[2020]{}

\begin{abstract} We consider  random walks on semisimple Lie groups where the support of the step distribution generates (as a group) a Zariski dense discrete subgroup of infinite covolume. When the semisimple Lie group has property (T), we show that the stationary measure on the Furstenberg boundary is singular to the Lebesgue measure class. This result does not require any condition on the moment or symmetry of the step distribution. When the semisimple Lie group has rank one and the step distribution has a finite first moment, we again show that the stationary measure on the Furstenberg boundary is singular to the Lebesgue measure class. For general semisimple Lie groups, we also obtain a sufficient condition  for the singularity of the stationary measure and a general Patterson--Sullivan measure.
\end{abstract}

\maketitle

\vspace{-1em}

\section{Introduction}

Let $\Gsf$ be a connected semisimple Lie group without compact factors and with finite center. Suppose $\Gamma < \Gsf$ is a Zariski dense discrete subgroup, and $\mathsf{m}$ is a probability measure on $\Gamma$ whose support generates $\Gamma$ (as a group). 
Let $\Fc$ denote the Furstenberg boundary, i.e. the flag manifold $\Fc := \Gsf / \Psf$ for a minimal parabolic subgroup $\Psf < \Gsf$. Then there is a unique \emph{$\mathsf{m}$-stationary measure} $\nu$ on $\Fc$ and is equal to the hitting measure for the random walk generated by $\mathsf{m}$ \cite{Furstenberg_boundary, GR_Furstenberg, GolMar1989}. The measure $\nu$ is also referred to as the \emph{Furstenberg measure}. 

The following is a well-known conjecture (cf. Kaimanovich--Le Prince \cite{KP_matrix}).
 
\begin{conjecture}[Singularity conjecture] \label{conj.singularity Liegp} If $\mathsf{m}$ has finite support, then the $\mathsf{m}$-stationary measure $\nu$ is singular to the Lebesgue measure class on $\Fc$. 
\end{conjecture}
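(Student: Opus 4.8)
The plan is to combine two arguments --- one representation-theoretic, one dynamical --- and to identify the gap between them. The common first step is a zero--one law: since every $g\in\Gsf$ acts on $\Fc$ by a diffeomorphism, the absolutely continuous and the singular parts of $\nu$ (with respect to the Lebesgue class) are each $\mathsf{m}$-stationary, so by uniqueness of the stationary measure either $\nu\perp\Leb$ or $\nu\ll\Leb$; moreover, again by stationarity and by ergodicity of the $\Gamma$-action on $\Fc$ in the Lebesgue class, $\nu\ll\Leb$ forces $\nu$ to be equivalent to $\Leb$, so that the measure class of $\nu$ is $\Gsf$-invariant. Thus it suffices to assume $\nu$ is equivalent to $\Leb$ and derive a contradiction.

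The representation-theoretic argument treats every $\Gsf$ that has an almost-simple factor with property (T). Projecting $\mathsf{m}$ to such a factor $\Gsf_0$, its image is Zariski dense in $\Gsf_0$ and the marginal of $\nu$ on $\Gsf_0/\Psf_0$ is the corresponding stationary measure; since singularity of this marginal forces $\nu\perp\Leb$, we may assume $\Gsf$ itself has property (T). Then, because $\nu$ is equivalent to $\Leb$, the Koopman representation of $\Gsf$ on $L^2(\Fc,\nu)$ is unitarily equivalent to the quasi-regular representation on $L^2(\Gsf/\Psf)$, which weakly contains the trivial representation since $\Psf$ is amenable. Property (T) then produces a genuine nonzero $\Gsf$-invariant vector $f\in L^2(\Fc,\nu)$, and $\abs{f}^2\nu$ is a nonzero finite $\Gsf$-invariant measure on the compact space $\Fc$ --- impossible, as the minimal parabolic $\Psf$ is non-unimodular and hence $\Gsf/\Psf$ carries no $\Gsf$-invariant probability measure. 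This argument uses no moment or symmetry hypothesis on $\mathsf{m}$, nor even discreteness of $\Gamma$.

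When no factor of $\Gsf$ has property (T) --- so $\Gsf$ is locally a product of rank-one factors of type $\mathrm{SO}(n,1)$ or $\mathrm{SU}(n,1)$ --- we would instead use the dynamics of the walk. Assuming $\nu\ll\Leb$ and using that a finitely supported $\mathsf{m}$ has a finite exponential moment, the sample paths $X_n o$ track geodesics with logarithmic error, $\nu$ is carried by the conical limit set of $\Gamma$, and therefore $\dim_H\nu\le\delta_\Gamma$ by the Bishop--Jones formula ($\delta_\Gamma$ the critical exponent), while $\nu\ll\Leb$ forces $\dim_H\nu$ to equal the full dimension of $\Fc$. When $\Gamma$ has non-maximal growth this already gives a contradiction. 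The remaining difficulty, even in rank one, is the case $\delta_\Gamma$ maximal --- which does occur in infinite covolume, for instance for normal subgroups of lattices with amenable quotient --- in which $\nu$ would have to coincide with the conformal density of full dimension, namely $\Leb$ itself; one then argues that a finitely supported walk on a discrete group of infinite covolume cannot have its harmonic measure equal to the Lebesgue/Patterson--Sullivan measure, equivalently that the fundamental inequality relating asymptotic entropy, drift, and $\delta_\Gamma$ is strict, using rigidity of Gou\"ezel--Mathieu--Peign\'e type (or a divergence--convergence comparison). This would complete the rank-one case.

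The hard part is the remaining higher-rank situation: $\Gsf$ a product of two or more rank-one factors of type $\mathrm{SO}(n,1)$ or $\mathrm{SU}(n,1)$, none with property (T). Here the invariant controlling $\nu$ is not a single Hausdorff dimension but the whole Lyapunov (Jordan-projection) spectrum, and $\nu$ equivalent to $\Leb$ only forces this spectrum to be maximal coordinate by coordinate; since a measure on a product with absolutely continuous marginals may still be singular, one needs a genuinely joint estimate, not a factor-by-factor one. Compounding this, the projection of $\Gamma$ to an individual rank-one factor need not be discrete, so the discreteness and infinite covolume of $\Gamma$ are invisible from any single factor, and no higher-rank analogue of the above fundamental-inequality rigidity is available. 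Producing such a joint mechanism --- or one that exploits the discreteness of $\Gamma$ in the ambient product directly --- is the crux of the conjecture; the same obstruction appears for higher-rank lattices without property (T), where the infinite-covolume hypothesis is vacuous and what is needed instead is strictness of the fundamental inequality.
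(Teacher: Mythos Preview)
The statement is a \emph{conjecture} in the paper, not a theorem: the paper establishes only the partial cases in Theorem~\ref{thm:singularity discrete group} (property~(T) with infinite covolume) and Theorem~\ref{thm:rank one Leb} (rank one, infinite covolume, finite first moment). Your proposal attempts the full conjecture and rightly flags the lattice and product-of-rank-one cases as open, but the property~(T) argument---the one part you present as complete---is incorrect.

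The error is the assertion that amenability of $\Psf$ forces the quasi-regular representation on $L^2(\Gsf/\Psf)$ to weakly contain the trivial representation. Amenability of a closed subgroup $H<G$ yields $L^2(G/H)\prec\lambda_G$ (via continuity of induction in the Fell topology), not $\mathbf{1}_G\prec L^2(G/H)$; the latter is \emph{co-amenability} of $H$ in $G$, which for compact $G/H$ amounts to the existence of a $G$-invariant probability measure on $G/H$---precisely what you invoke for the contradiction, and precisely what fails for $\Fc$. Indeed, combining property~(T) with the absence of such an invariant measure shows directly that $L^2(\Gsf/\Psf)$ has no almost-invariant vectors, so the argument is internally inconsistent. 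A further red flag: your reasoning uses neither $\Gamma$, nor $\mathsf{m}$, nor finite support, nor discreteness, nor covolume, so if valid it would rule out \emph{every} stationary measure in the Lebesgue class on $\Fc$ whenever $\Gsf$ has property~(T), contradicting the Furstenberg and Lyons--Sullivan constructions for lattices cited in the paper. The paper's route to the property~(T), infinite-covolume case (Theorem~\ref{thm:singularity discrete group}) is entirely different and genuinely uses discreteness and infinite covolume: the gap theorem for the growth indicator function (Proposition~\ref{prop:gap result}) together with B\'enard's conicality of random walks (Theorem~\ref{thm:conical RW}) shows that non-singularity with $\Leb$ would force $\Leb(\La^{\rm con}(\Gamma))>0$, which contradicts Proposition~\ref{prop:LO76}.
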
 

The Lie group $\Gsf$ has Kazhdan's property (T)  if and only if no simple factor of $\Gsf$ is locally isomorphic to $\mathsf{SO}(n,1)$ or $\mathsf{PU}(n,1)$ for any $n \geq 1$.
For such $\Gsf$ and $\Ga < \Gsf$ with infinite covolume, we give an affirmative answer to the singularity conjecture. We emphasize that in the following, $\mathsf{m}$ does not have \emph{any moment condition} and \emph{$\Ga$ does not need to be generated by the support of $\mathsf{m}$ as a semigroup}. 
 
\begin{theorem} \label{thm:singularity discrete group}  
Assume $\Gsf$ has property \emph{(T)}. If $\mathsf{m}$ is a probability measure on $\Gsf$ whose support generates (as a group) a Zariski dense discrete subgroup $\Gamma < \Gsf$ with 
$$
\Vol(\Ga \ba \Gsf) = + \infty,
$$
  then the $\mathsf{m}$-stationary measure  $\nu$ is singular to the Lebesgue measure class on $\Fc$. 
\end{theorem}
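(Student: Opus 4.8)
The key tension is between Lebesgue (absolutely continuous) stationary measure and infinite covolume. If $\nu$ were absolutely continuous, the stationary measure would be the unique $\mathsf{K}$-invariant measure class on $\Fc$ (up to the action of $\mathsf{G}$), and one expects this to force $\Gamma$ to be "large" — in fact a lattice. The classical route (à la Furstenberg) is: an absolutely continuous stationary measure makes the Poisson boundary of $(\Gamma,\mathsf{m})$ identify with $(\Fc,\nu)$ equipped with the Lebesgue class, and then one tries to promote this to a statement about $\Gamma$ itself.

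**Using property (T).**

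Here is where property (T) should do the heavy lifting. Suppose for contradiction that $\nu \ll \Leb$. Consider the quasi-regular representation of $\mathsf{G}$ on $L^2(\Gamma\backslash\mathsf{G})$. Infinite covolume means this has no invariant vectors; property (T) then gives a spectral gap: there are no almost-invariant vectors either. On the other hand, I would try to build almost-invariant vectors out of the stationary measure. The mechanism: absolute continuity of $\nu$ means the Radon–Nikodym cocycle $\frac{d g_\star\nu}{d\nu}$ on $\Fc$ is controlled, and one can transfer the random walk on $\Gamma$ to $\mathsf{G}$ and use the $\mathsf{m}$-stationarity to produce a sequence of unit vectors in $L^2(\Gamma\backslash\mathsf{G})$ (roughly, averages $\sum_{\gamma}\mathsf{m}^{*n}(\gamma)\,\delta_{\Gamma\gamma}$ suitably renormalized, or harmonic functions on $\Gamma\backslash\mathsf{G}$ coming from the $\nu$-boundary map) that become asymptotically $\mathsf{G}$-invariant — contradicting the spectral gap unless $\nu$ is singular.

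**Key steps, in order.**

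\begin{enumerate}
\item Assume $\nu \ll \Leb$ on $\Fc$. Since the $\mathsf{m}$-stationary measure is unique and the Lebesgue class is the unique $\mathsf{K}$-quasi-invariant class, deduce $\nu$ lies in the Lebesgue class and is quasi-invariant under all of $\mathsf{G}$.
\item Form the unitary representation $\pi$ of $\mathsf{G}$ on $L^2(\Gamma\backslash\mathsf{G})$. By infinite covolume, $\pi$ has no nonzero invariant vectors.
\item Use stationarity and absolute continuity to manufacture a sequence of almost-$\pi$-invariant unit vectors: push the measures $\mathsf{m}^{*n}$ on $\Gamma$ forward to $\Gamma\backslash\mathsf{G}$ after convolving with a fixed bump, exploit that the Furstenberg boundary map has absolutely continuous image so that no "escape of mass" to singular directions occurs, and show the resulting vectors become $\mathsf{G}$-invariant in the limit.
\item Invoke property (T): $\pi$ then must have an invariant vector, contradicting step (2). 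Hence $\nu \perp \Leb$.
\end{enumerate}

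**Main obstacle.**

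The delicate point is step (3): converting absolute continuity of the boundary measure into genuine almost-invariant vectors for the $\Gamma\backslash\mathsf{G}$-representation, with \emph{no moment assumption} on $\mathsf{m}$ and \emph{without} $\Gamma$ being generated by $\supp\mathsf{m}$ as a semigroup. Without moments one cannot use entropy/dimension estimates or Lyapunov exponents, so the argument must be purely measure-theoretic and functional-analytic. I expect one has to work with the Radon–Nikodym cocycle directly: absolute continuity means the cocycle $g\mapsto \frac{dg^{-1}\nu}{d\nu}$ takes values in $L^1(\Fc,\nu)$ and the "harmonic" structure (stationarity) pairs it against $\mathsf{m}^{*n}$ to give functions on $\Gamma\backslash\mathsf{G}$ whose $\mathsf{G}$-translates converge. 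Making this rigorous — in particular, ruling out that the almost-invariant vectors "leak to infinity" in the infinite-volume quotient — is the heart of the proof, and is presumably where property (T) is used a second time, via the fact that (T) groups have no sequence of almost-invariant vectors even in the weak sense, forcing an honest contradiction.
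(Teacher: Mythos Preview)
Your outline identifies the right tension---property~(T) versus infinite covolume---but step~(3) is not an argument; it is a hope. You propose to ``push the measures $\mathsf{m}^{*n}$ on $\Gamma$ forward to $\Gamma\backslash\Gsf$ after convolving with a fixed bump'' and obtain almost-invariant vectors, but the pushforward of $\mathsf{m}^{*n}$ to $\Gamma\backslash\Gsf$ is concentrated at the identity coset for every $n$, and convolving with a bump does not change this. More generally, there is no known mechanism that converts absolute continuity of a boundary stationary measure directly into almost-invariant unit vectors in $L^2(\Gamma\backslash\Gsf)$, and your paragraph on the ``main obstacle'' effectively concedes that you do not have one. Without a concrete construction here, the argument does not close: property~(T) only yields a contradiction once you actually exhibit the almost-invariant sequence.

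The paper's proof proceeds along a genuinely different route and never attempts to build almost-invariant vectors from $\nu$. Property~(T) enters instead through quantitative decay of matrix coefficients (Oh, Cowling, Cowling--Haagerup--Howe): after a reduction to ensure that $L^2(\Gamma\backslash\Gsf)$ has no invariant vectors for any simple factor, one deduces a strict gap $\psi_\Gamma < \Psi_{\Gsf}$ on $\mfa^+\smallsetminus\{0\}$ for the growth indicator function. The stationary measure is then linked to $\Gamma$-geometry via B\'enard's theorem that random walk trajectories converge \emph{conically}, so $\nu$ gives full mass to the conical limit set $\Lambda^{\rm con}(\Gamma)$; non-singularity would force $\Leb(\Lambda^{\rm con}(\Gamma))>0$. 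But a result of Lee--Oh says that the gap $\psi_\Gamma < \Psi_{\Gsf}$ forces $\Leb(\Lambda^{\rm con}(\Gamma))=0$, and this is the contradiction. In short, the paper trades your missing functional-analytic step for a combination of growth-indicator estimates and conicality of the random walk, both of which are available without any moment hypothesis.
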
   

We will show the same results for rank one Lie groups without property (T) in Theorem \ref{thm:rank one Leb} and Theorem \ref{thm:rank one case}, assuming the finite moment condition.

\begin{remark} \
\begin{enumerate}
\item In the special case when $\mathsf{m}$ is symmetric (i.e. $\mathsf{m}(g) = \mathsf{m}(g^{-1})$ for all $g \in \Gamma$) and the support of $\mathsf{m}$ generates $\Gamma$ as a semigroup, Lee--Tiozzo--Van Limbeek~\cite{LTvW2025} have independently obtained a proof of Theorem~\ref{thm:singularity discrete group}.
\item When $\Ga < \Gsf$ is a lattice, there exists $\mathsf{m}$ with infinite support such that the $\mathsf{m}$-stationary measure $\nu$ on $\Fc$ is in the Lebesgue measure class on $\Fc$, as shown by Furstenberg \cite{Furstenberg_RW}, Lyons--Sullivan \cite{LS_RW}, and Ballmann--Ledrappier \cite{BL_harmonic} (see also \cite{BQ_stationary}).

\item There exist examples where $\mathsf{m}$ is a finitely supported probability measure on $\Gsf$, the group generated by the support of $\mathsf{m}$ is non-discrete, and the $\mathsf{m}$-stationary measure is absolutely continuous with respect to the Lebesgue measure class on $\Fc$. For $\Gsf = \SL(2, \Rb)$, this was shown by B\'ar\'any--Pollicott--Simon \cite[Theorem 26, Section 8]{BPS_stationary}, and Bourgain constructed such a measure $\mathsf{m}$ with  symmetric support \cite{Bourgain_abscont}. For $\Gsf \neq \SL(2,\Rb)$, Benoist--Quint gave a construction of such $\mathsf{m}$ with symmetric support \cite[Theorem 1.2]{BQ_stationary}. 
\item In contrast to the rigidity in Theorem~\ref{thm:singularity discrete group}, Dey--Hurtado~\cite{DeyHurtado} have constructed examples of non-lattice discrete subgroups $\Gamma  < \Gsf$ which act minimally on $\Fc$ and in particular there exist stationary measures coming from random walks on such groups which have full support in $\Fc$.
\end{enumerate}
\end{remark} 

The proof of Theorem~\ref{thm:singularity discrete group} is based on studying the growth indicator function of $\Gamma$ (defined in the next subsection). When $\Gsf$ has property (T) and $\Gamma$ does not have a lattice factor, one can use estimates on matrix coefficients of unitary representations to prove a gap result for this function: this is a well-known theorem of Quint \cite{Quint_Kazhdan} for $\Gsf$ higher rank simple (also \cite{LO_Dichotomy} for a shorter  proof) and of Corlette \cite{Corlette_gap} for $\Gsf$ rank one. In Proposition~\ref{prop:gap result}, we observe that this gap result extends to the semisimple case.
On the other hand, using a result of B\'enard \cite{Benard_RW} about random walks and work of Lee--Oh \cite{LO_Dichotomy} on a relation between Lebesgue measure and the growth indicator function, we will show that the non-singularity of the stationary measure and Lebesgue measure class implies that the growth indicator function violates this gap.

\subsection{Patterson--Sullivan measures and the growth indicator function} Our argument will exploit the fact that the Lebesgue measure class on $\Fc$ contains a ``higher rank Patterson--Sullivan measure.'' In this subsection we recall the definition of these measures and the definition of the growth indicator function. Then we explain how non-singularity between a stationary measure and a Patterson--Sullivan measure restricts the growth indicator function. This result does not require that $\Gsf$ has property (T).

Fix a maximal real split torus $\Asf < \Psf$ and write $\mfa := \log \Asf$.  
For $\Ga < \Gsf$, a functional $\phi : \mfa \to \Rb$, and $\delta \ge 0$, a Borel probability measure $\mu$ on $\F$ is called a \emph{$\delta$-dimensional coarse $\phi$-Patterson--Sullivan measure} of $\Ga$ if there exists $C \ge 1$ such that for any $g \in \Ga$, the measures $\mu$, $g_*\mu$ are absolutely continuous and
\begin{equation}\label{eqn:PS intro}
\frac{1}{C} e^{- \delta \phi(B(g^{-1}, x))} \le \frac{d g_* \mu}{d \mu}(x) \le C e^{- \delta \phi(B(g^{-1}, x))} \quad \text{for }\mu\text{-a.e.} \quad x \in \Fc,
\end{equation}
where $B : \Gsf \times \F \to \fa$ is the Iwasawa cocycle (see Section~\ref{subsec:PS}).
When $C = 1$ and hence equality holds above, we call $\mu$ a \emph{$\delta$-dimensional $\phi$-Patterson--Sullivan measure}. These higher rank Patterson--Sullivan measures were introduced and constructed in \cite{Quint_PS}. We also note that in the literature these measures are sometimes called \emph{(quasi-)conformal measures}.

We choose a positive closed Weyl chamber $\mfa^+ \subset \mfa$ and denote the associated Cartan projection by $\kappa : \Gsf \to \mfa^+$ (see Equation~\eqref{eqn:Cartan projection}). Given a discrete subgroup $\Ga < \Gsf$,  the \emph{growth indicator function} $\psi_{\Ga} : \mfa \to \R_{\ge 0} \cup \{-\infty\}$ is defined by 
$$
\psi_{\Ga}(u) := \norm{u} \cdot \inf_{\Cc \ni u} \left\{ \text{critical exponent of } s \mapsto \sum_{\ga \in \Ga, \kappa(\ga) \in \Cc} e^{-s \norm{\kappa(\ga)}} \right\}
$$ 
where $\norm{\cdot}$ is any norm  on $\mfa$ and the infimum is over all open cones $\Cc \subset \mfa$ containing $u$. Notice that $\psi_{\Ga} \equiv -\infty$ on $\mfa \smallsetminus \mfa^+$. The growth indicator function can be viewed as a higher rank analogue of the critical exponent in rank one settings and was introduced by Quint \cite{Quint_Divergence}.

We will show that non-singularity between a stationary measure and a Patterson--Sullivan measure restricts the growth indicator function.

\begin{theorem} \label{thm.general PS}
   Suppose $\mathsf{m}$ is a probability measure on $\Gsf$ whose support generates (as a group) a Zariski dense discrete subgroup $\Ga < \Gsf$ and $\mu$ is a $\delta$-dimensional coarse $\phi$-Patterson--Sullivan measure of $\Ga$. If the $\mathsf{m}$-stationary measure $\nu$ on $\Fc$ is non-singular to $\mu$, then
   $$
   \sum_{\ga \in \Ga} e^{-\delta \phi(\kappa(\ga))} = + \infty \quad \text{and} \quad \psi_{\Ga} \le \delta \cdot \phi \quad \text{on} \quad \mfa
   $$
   with equality at some $u \in \mfa^+ \smallsetminus \{0\}$.

\end{theorem}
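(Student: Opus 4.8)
The plan is to combine uniqueness of the $\mathsf{m}$-stationary measure, a theorem of B\'enard \cite{Benard_RW} that the stationary measure lives on the radial limit set, and work of Lee--Oh \cite{LO_Dichotomy} relating a coarse Patterson--Sullivan measure to the growth indicator function; none of these requires a moment or symmetry hypothesis. For the first input, note that since $\langle \supp \mathsf{m}\rangle = \Gamma$ is discrete, $\mathsf{m}$ is carried by the countable set $\Gamma$, and \eqref{eqn:PS intro} makes $\mu$ $\Gamma$-quasi-invariant. Let $\nu = \nu_{\mathrm{ac}} + \nu_{\mathrm{s}}$ be the Lebesgue decomposition of $\nu$ with respect to $\mu$. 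For each $g \in \Gamma$ one has $g_*\nu_{\mathrm{ac}} \ll g_*\mu \sim \mu$ while $g_*\nu_{\mathrm{s}} \perp g_*\mu \sim \mu$, so $g_*\nu = g_*\nu_{\mathrm{ac}} + g_*\nu_{\mathrm{s}}$ is the Lebesgue decomposition of $g_*\nu$ with respect to $\mu$; moreover a countable sum of $\mu$-singular measures is $\mu$-singular. Applying this to the identity $\nu = \sum_{g \in \Gamma}\mathsf{m}(g)\,g_*\nu$ and using uniqueness of the Lebesgue decomposition gives $\nu_{\mathrm{ac}} = \sum_{g \in \Gamma}\mathsf{m}(g)\,g_*\nu_{\mathrm{ac}}$, so $\nu_{\mathrm{ac}}$ is a nonzero $\mathsf{m}$-stationary measure. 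By uniqueness of the $\mathsf{m}$-stationary probability measure \cite{Furstenberg_boundary, GR_Furstenberg, GolMar1989}, $\nu_{\mathrm{ac}}$ is a positive multiple of $\nu$, and hence $\nu \ll \mu$.

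\textbf{Step 2: passing to the growth indicator.} For a coarse $(\delta,\phi)$-Patterson--Sullivan measure $\mu$ of the Zariski dense $\Gamma$, the shadow lemma gives $\mu(\mathcal{O}_r(\gamma)) \asymp e^{-\delta\phi(\kappa(\gamma))}$ for shadows of orbit points $\gamma$ with $r$ large; a bounded-multiplicity estimate for these shadows together with $\mu(\Fc) = 1$ then yields $\psi_\Gamma \le \delta\phi$ on $\mfa$, straight from the definition of the growth indicator and the linearity of $\phi$. Lee--Oh's dichotomy \cite{LO_Dichotomy} moreover says that $\mu(\Lambda^{\mathrm{rad}}_\Gamma) > 0$ forces $\sum_{\gamma \in \Gamma} e^{-\delta\phi(\kappa(\gamma))} = +\infty$, where $\Lambda^{\mathrm{rad}}_\Gamma \subseteq \Fc$ is the radial limit set. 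Now B\'enard's theorem \cite{Benard_RW} gives $\nu(\Lambda^{\mathrm{rad}}_\Gamma) = 1$, and by Step 1 $\nu \ll \mu$, so $\mu(\Lambda^{\mathrm{rad}}_\Gamma) > 0$; hence the Poincar\'e series diverges. Finally, for the equality clause: by Quint's criterion \cite{Quint_Divergence}, if $\delta\phi > \psi_\Gamma$ on all of $\mfa^+ \smallsetminus \{0\}$ the $\phi$-Poincar\'e series would converge, so divergence together with $\psi_\Gamma \le \delta\phi$ forces $\psi_\Gamma(u) = \delta\phi(u)$ for some $u \ne 0$; such a $u$ lies in $\mfa^+$ (since $\psi_\Gamma \equiv -\infty$ off $\mfa^+$) and can be taken in the limit cone, which is nonzero as $\Gamma$ is unbounded.

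\textbf{Main obstacle.} The crux is verifying that the external inputs apply in exactly the generality needed: that B\'enard's radiality statement for $\nu$ and Lee--Oh's dichotomy refer to the \emph{same} notion of radial limit set in this higher-rank, infinite-covolume, moment-free context, and that the \emph{coarse} Patterson--Sullivan hypothesis (constant $C \ge 1$ rather than $C = 1$ in \eqref{eqn:PS intro}) still suffices for the shadow lemma and the Hopf--Tsuji--Sullivan argument. One should also keep the radiality input direction-free --- $\nu$-a.e.\ point is approached conically along \emph{some} Weyl chamber ray --- since without a moment assumption there need be no single Lyapunov direction for the walk.
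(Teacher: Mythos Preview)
Your proof is correct and follows essentially the same route as the paper: B\'enard's theorem gives $\nu(\Lambda^{\mathrm{con}}(\Gamma))=1$; this forces $\mu(\Lambda^{\mathrm{con}}(\Gamma))>0$; the Shadow Lemma plus Borel--Cantelli yields divergence of the Poincar\'e series; Quint's results give $\psi_\Gamma\le\delta\phi$ and the equality point.

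The one notable difference is your Step~1. The paper does not upgrade non-singularity to $\nu\ll\mu$; it uses the elementary observation that if $\nu(\Lambda^{\mathrm{con}}(\Gamma))=1$ while $\mu(\Lambda^{\mathrm{con}}(\Gamma))=0$, then $\nu\perp\mu$. Your Lebesgue-decomposition argument is correct and gives the stronger conclusion $\nu\ll\mu$, but it is not needed for the theorem. Relatedly, the paper avoids invoking a Lee--Oh dichotomy at the divergence step: it works directly from the Shadow Lemma (explicitly noted to hold for \emph{coarse} Patterson--Sullivan measures) and the first Borel--Cantelli lemma, which dissolves the concerns you flag in your ``Main obstacle'' about matching radial-limit-set conventions and about whether the coarse hypothesis suffices.
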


When $\mathsf{m}$ has finite superexponential moment (e.g. finite support), $\Ga$ is relatively hyperbolic as an abstract group, and $\Gamma$ is generated by $\supp \mathsf{m}$ as a semigroup,  we proved the stronger statement \cite[Theorem 1.15]{KZ_Rigidity}: if the $\mathsf{m}$-stationary measure is non-singular to a $\delta$-dimensional coarse  $\phi$-Patterson--Sullivan measure of $\Ga$, then \begin{equation} \label{eqn:KZ_Green}
   \sup_{\ga \in \Ga} | \dist_{G}(\id, \ga) - \delta \phi(\kappa(\ga))| < + \infty
\end{equation}
 where $\dist_G$ denotes the Green metric for $\mathsf{m}$ on $\Ga$. In particular, any orbit map of $\Ga$ into the symmetric space associated to $\Gsf$ is a quasi-isometric embedding (with respect to a word metric on $\Ga$). It would be interesting to know if this is true for more general $\Ga$ and $\mathsf{m}$. 

\subsection{The rank one case}  We now consider the case when  $\rank \Gsf = 1$ and denote by $(X, \dist_X)$ the associated symmetric space of $\Gsf$. 
Then $X$ is a negatively curved symmetric space and when $X$ is a real or complex hyperbolic space, $\Gsf$ does not have  property (T). 

We fix a basepoint $o \in X$. In this setting, the Furstenberg boundary $\Fc$ is identified with the visual boundary $\partial_{\infty} X$, and for a discrete subgroup $\Ga < \Gsf$, its limit set $\La(\Ga) \subset \partial_{\infty} X$  is the set of all accumulation points of $\Ga (o) \subset X$. The group $\Ga$ is \emph{non-elementary} if $\# \La(\Ga) \ge 3$.

Given a probability measure $\mathsf{m}$ on $\Gsf$, if the support of $\mathsf{m}$ generates a non-elementary discrete subgroup as a group, then there exists a unique $\mathsf{m}$-stationary measure on $\partial_{\infty} X$ which is equal to the hitting measure for the random walk \cite[Remark following Theorem 7.7]{Kaimanovich2000}.
The measure $\mathsf{m}$ has a \emph{finite first moment} if
$$
\int_{\Gsf} \dist_{X}(o, g o) d \mathsf{m}(g) < + \infty.
$$
Under the finite first moment condition, we prove the singularity conjecture (Conjecture \ref{conj.singularity Liegp}) for non-lattice discrete subgroups of $\Gsf$.

\begin{theorem} \label{thm:rank one Leb}
Suppose $\rank \Gsf = 1$. If $\mathsf{m}$ is a probability measure on $\Gsf$ whose support generates (as a group) a non-elementary discrete subgroup $\Ga < \Gsf$ with $$
\Vol(\Ga \ba \Gsf) = + \infty
$$
and $\mathsf{m}$ has a finite first moment, then the $\mathsf{m}$-stationary measure $\nu$ is singular to the Lebesgue measure class on $\partial_{\infty} X$.
\end{theorem}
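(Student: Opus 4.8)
The plan is to argue by contradiction: assuming $\nu$ is non-singular to the Lebesgue measure class, we will force $\Ga$ to be a lattice in $\Gsf$, contradicting $\Vol(\Ga \ba \Gsf) = +\infty$.

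First I would specialize Theorem~\ref{thm.general PS} to the rank one setting. Since $\rank \Gsf = 1$, the space $\mfa$ is one-dimensional; write $\mfa = \Rb H_0$ with $H_0$ normalized so that $\dist_X(o, \exp(tH_0) o) = \abs{t}$, and let $\phi_0 \in \mfa^*$ be defined by $\phi_0(t H_0) = t$, so that $\phi_0(\kappa(g)) = \dist_X(o, go)$ and $\phi_0 \circ B$ is the Busemann cocycle. Let $\delta_{\Gsf} = h_{\mathrm{vol}}(X) = \norm{2\rho}$ be the volume entropy of $X$. It is classical, and recorded in the form we need in \cite{LO_Dichotomy}, that the visual (Lebesgue) measure $\sigma_o$ on $\partial_\infty X$ based at $o$ is a $\delta_{\Gsf}$-dimensional $\phi_0$-Patterson--Sullivan measure of $\Gsf$, hence a $\delta_{\Gsf}$-dimensional coarse $\phi_0$-Patterson--Sullivan measure of $\Ga$. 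Moreover, along the single ray $\mfa^+$ the growth indicator is linear: $\psi_{\Ga} = \delta_{\Ga}\cdot \phi_0$ on $\mfa^+$, where $\delta_{\Ga}$ is the critical exponent of $\Ga$, and $\delta_{\Ga} \le \delta_{\Gsf}$ always holds by discreteness of $\Ga$. Now suppose $\nu$ is non-singular to the Lebesgue class, hence to $\sigma_o$. Applying Theorem~\ref{thm.general PS} with $\mu = \sigma_o$, $\delta = \delta_{\Gsf}$, $\phi = \phi_0$ yields
$$
\sum_{\ga \in \Ga} e^{-\delta_{\Gsf} \dist_X(o, \ga o)} = +\infty , \qquad \psi_{\Ga} = \delta_{\Gsf} \phi_0 \ \text{ on } \ \mfa^+ ,
$$
the equality propagating from a single point of $\mfa^+ \smallsetminus \{0\}$ to the whole ray by linearity of both sides. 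In particular $\delta_{\Ga} = \delta_{\Gsf}$ and $\Ga$ is of divergence type.

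It remains to promote this to the conclusion that $\Ga$ is a lattice, and this is where the finite first moment enters through \cite{Benard_RW}. Since $\mathsf{m}$ has finite first moment, the drift $\ell = \lim_n \frac1n \dist_X(o, Z_n o)$ of the random walk $(Z_n)$ generated by $\mathsf{m}$ exists and is positive, and the Furstenberg measure $\nu$ is exact dimensional with $\dim_H \nu = h_F / \ell$, where $h_F$ is the Furstenberg entropy of $\nu$. Since $\nu$ is $\mathsf{m}$-ergodic (being the unique stationary measure) and non-singular to $\sigma_o$, which is exact dimensional of dimension $\delta_{\Gsf}$ for the visual metric, we conclude $\dim_H \nu = \delta_{\Gsf}$ and hence $h_F = \ell \cdot \delta_{\Gsf}$, i.e. equality in Guivarc'h's fundamental inequality $h_F \le \ell \cdot \delta_{\Gsf}$. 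Combining this equality with $\delta_{\Ga} = \delta_{\Gsf}$, divergence type, and the growth-indicator characterization of the Lebesgue class in \cite{LO_Dichotomy} forces the Bowen--Margulis--Sullivan measure of $\Ga$ to be finite and proportional to the Liouville measure; equivalently $\Vol(\Ga \ba \Gsf) < +\infty$. This contradicts the hypothesis and finishes the proof.

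The main obstacle is this last step. The conclusion of Theorem~\ref{thm.general PS} by itself cannot suffice, because there exist non-lattice discrete subgroups of $\Gsf$ --- for instance kernels of surjections of a uniform lattice onto $\Zb$ or $\Zb^2$ --- which are of divergence type and have maximal critical exponent $\delta_{\Ga} = \delta_{\Gsf}$, so that $\psi_{\Ga} = \delta_{\Gsf} \phi_0$; thus no argument phrased purely in terms of $\Ga$ as a discrete subgroup can conclude. What rules these examples out is the finite first moment of $\mathsf{m}$: the harmonic measure $\nu$ is a priori only a coarse Patterson--Sullivan measure for the Green metric of $\mathsf{m}$, and the point is that its non-singularity with the $\delta_{\Gsf}$-dimensional Lebesgue Patterson--Sullivan measure, together with the sharp control on dimension, entropy and drift of harmonic measures from \cite{Benard_RW}, forces the Green metric and the Riemannian metric to be comparable tightly enough to recover finiteness of covolume. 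Making this comparison quantitative --- rather than merely coarse, as in our earlier \cite{KZ_Rigidity} where a superexponential moment and relative hyperbolicity were assumed --- is the crux of the rank one case.
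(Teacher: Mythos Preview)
Your argument has a real gap, which you yourself flag: the final step, passing from $h_F = \ell \cdot \delta_{\Gsf}$, $\delta_\Ga = \delta_{\Gsf}$, and divergence type to $\Vol(\Ga\ba\Gsf) < +\infty$, is never carried out. You assert that equality in the fundamental inequality ``forces the Green metric and the Riemannian metric to be comparable tightly enough to recover finiteness of covolume,'' but this is a statement of what you would need, not a proof of it. As you observe, there are non-lattice $\Ga$ (e.g.\ infinite normal subgroups of uniform lattices) with $\delta_\Ga = \delta_{\Gsf}$ and divergence type; your argument gives no mechanism ruling out a finite-first-moment walk on such a $\Ga$ whose harmonic measure is equivalent to Lebesgue. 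Indeed, once you have established $\nu \sim \sigma_o$ and $h_F = \ell\,\delta_{\Gsf}$, the remaining task is essentially the theorem itself, so the reduction is circular. No rigidity theorem in the literature supplies the missing implication under only a first-moment hypothesis.

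The paper's proof is completely different and bypasses growth indicators, critical exponents, and the entropy/dimension formalism. It deduces Theorem~\ref{thm:rank one Leb} from Theorem~\ref{thm:rank one case} by taking $\mu$ in the Lebesgue class, so that $m_\mu^{\BMS}$ is infinite precisely when $\Vol(\Ga\ba\Gsf)=+\infty$. The contradiction is then purely dynamical. Non-singularity of $\nu$ and $\mu$ gives $\mu(\La^{\rm con}(\Ga))>0$ via Corollary~\ref{cor:conical has positive measure}, so by Hopf--Tsuji--Sullivan the geodesic flow is conservative and ergodic for $m_\mu^{\BMS}$; since that measure is infinite, the Hopf ratio ergodic theorem forces $\tfrac{1}{T}\int_0^T \mathbf{1}_{\Ga\cdot B_R(o)}(\sigma_x(t))\,dt\to 0$ for $\mu$-a.e.\ $x$. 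On the other hand, B\'enard's conical recurrence (Theorem~\ref{thm:conical RW}) combined with the positive drift $\ell(\mathsf{m})>0$ --- this is the sole place the finite first moment is used --- shows that for $\nu$-a.e.\ $x$ the same time average has strictly positive $\liminf$. These two conclusions are incompatible when $\nu$ and $\mu$ are non-singular.
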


\begin{remark} 
   
   In the special case that
   \begin{itemize}
   \item $X = \Hb^n$ is a real hyperbolic space,
   \item  $\Ga < \Gsf$ is a non-uniform lattice (i.e., $\Ga \ba X$ is non-compact and has finite volume), and
   \item the support of $\mathsf{m}$ generates $\Ga$ as a \textbf{semigroup},
   \end{itemize}
   the singularity of $\mathsf{m}$-stationary measure on $\partial_{\infty} X$ and the Lebesgue measure class were known under various moment conditions with respect to a \textbf{word metric} on $\Ga$.

   When $n = 2$, Gadre--Maher--Tiozzo~\cite{GMT_word} showed the singularity for $\mathsf{m}$ with finite first moment with respect to a word metric on $\Ga$. See also \cite{GL_geodesic, DKN_circle, KP_matrix} for different contexts. For $n \ge 3$, this was obtained by Randecker--Tiozzo~\cite{RT_cusp}, under the moment condition that $\mathsf{m}$ has a finite $k$-th moment with respect to a word metric for some $k > n-1$, and a finite exponential moment for a so-called relative metric on $\Ga$.
   
   Further, we remark that Kosenko--Tiozzo \cite{KT_cocompact} explicitly constructed cocompact lattices $\Ga$ of $\Isom(\Hb^2)$ and finitely supported probability measures $\mathsf{m}$ such that the $\mathsf{m}$-stationary measure on $\partial_{\infty} \Hb^2$ is singular to the Lebesgue measure class on $\partial_{\infty} \Hb^2$.

\end{remark}

Theorem \ref{thm:rank one Leb} is a consequence of a more general result about Patterson--Sullivan measures.

Recall that the Busemann cocycle $\beta: \Gsf \times \partial_\infty X \rightarrow \Rb$ is defined by 
$$
\beta(g, x) := \lim_{p \to x} \dist_X(p, g^{-1} o) - \dist_X(p, o).
$$
Then, for $\Ga < \Gsf$ and $\delta \ge 0$, a Borel probability measure $\mu$ on $\partial_\infty X$ is called a \emph{$\delta$-dimensional Patterson--Sullivan measure} of $\Ga$ if for any $g \in \Ga$, the measures $\mu$, $g_*\mu$ are absolutely continuous and
\begin{equation}\label{eqn:PS hyperbolic} 
\frac{d g_* \mu}{d \mu}(x) = e^{- \delta \beta(g^{-1}, x)} \quad \text{for }\mu\text{-a.e.} \quad x \in \partial_{\infty} X.
\end{equation}
In fact these are a special case of the measures introduced in Equation~\eqref{eqn:PS intro}. In this rank one setting, we can identify $\fa$ with $\Rb$ and $\mfa^+$ with  $\Rb_{\ge 0}$ in such a way that the Cartan projection and the Iwasawa cocycle are the displacement of the basepoint $o \in X$ and the Busemann cocycle respectively:  for $g \in \Gsf$ and $x \in \partial_{\infty} X$,
$$\kappa(g) = \dist_X(o, g o) \quad \text{and} \quad B(g, x) = \beta(g,x).$$
Then Equation~\eqref{eqn:PS hyperbolic}  is exactly Equation~\eqref{eqn:PS intro}  with $C = 1$ and the functional $\phi$ which coincides with our identification $\mfa = \Rb$. 

For a non-elementary discrete subgroup $\Ga < \Gsf$ and a Patterson--Sullivan measure $\mu$ on $\partial_{\infty} X$ of $\Ga$, we denote by $m_{\mu}^{\BMS}$ the (generalized) Bowen--Margulis--Sullivan measure of $\Ga$ on the unit tangent bundle $\mathrm{T}^{1}(\Ga \ba X)$ associated to $\mu$ (see Section~\ref{subsec:BMS}).

\begin{theorem} \label{thm:rank one case}
Suppose $\rank \Gsf = 1$, $\Ga < \Gsf$ is a non-elementary discrete subgroup, and $\mu$ is a Patterson--Sullivan measure of $\Ga$ such that $$
m_{\mu}^{\BMS}(\mathrm{T}^{1}(\Ga \ba X)) = + \infty.
$$
If $\mathsf{m}$ is a probability measure on $\Ga$  whose support generates $\Ga$ (as a group) and $\mathsf{m}$ has a finite first moment,
then the $\mathsf{m}$-stationary measure $\nu$ on $\partial_{\infty} X$ is singular~to~$\mu$.
\end{theorem}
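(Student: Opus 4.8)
The plan is to argue by contradiction: assume $\nu$ is not singular to $\mu$ and derive $m_\mu^{\BMS}(\mathrm{T}^1(\Ga\ba X))<+\infty$, contradicting the hypothesis. First I would pass to rank-one language: under the identifications $\mfa\cong\Rb$, $\kappa(g)=\dist_X(o,go)$ and $B(g,\cdot)=\beta(g,\cdot)$, the measure $\mu$ is a $\delta$-dimensional coarse $\phi$-Patterson--Sullivan measure with $\phi=\mathrm{id}$ and $C=1$, while $\psi_\Ga(u)=\delta_\Ga\,u$ for $u\ge 0$, where $\delta_\Ga$ is the critical exponent of $\Ga$ (the only open cone in $\Rb$ containing a given $u>0$ being $\Rb_{>0}$ itself). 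Hence Theorem~\ref{thm.general PS} applies and yields $\sum_{\ga\in\Ga}e^{-\delta\dist_X(o,\ga o)}=+\infty$ together with $\psi_\Ga\le\delta\phi$ on $\mfa$ with equality at some nonzero point; in rank one this says precisely that $\delta=\delta_\Ga$ and that $\Ga$ is of divergence type at its critical exponent.

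Next I would use divergence type to pin down $\mu$: by Roblin's theory the $\delta_\Ga$-dimensional Patterson--Sullivan measure is unique up to scaling, non-atomic, carried by the conical limit set and $\Ga$-ergodic, and the geodesic flow on $(\mathrm{T}^1(\Ga\ba X),m_\mu^{\BMS})$ is conservative and ergodic. Since the stationary measure $\nu$ is $\Ga$-ergodic, its non-singularity to the $\Ga$-ergodic measure $\mu$ upgrades to $\nu\sim\mu$; by uniqueness of the Patterson--Sullivan measure, the hitting measure $\check\nu$ of the reflected step distribution likewise satisfies $\check\nu\sim\mu$. Running a two-sided $\mathsf m$-walk $(Z_n)_{n\in\Zb}$, the law of the pair of boundary limits $(Z^-_\infty,Z^+_\infty)$ on the space $\partial^{(2)}_\infty X$ of distinct pairs is equivalent to a $\Ga$-invariant ``harmonic current'' $\mathsf H$, and from $\nu,\check\nu\sim\mu$ we obtain $\mathsf H\sim\mu\otimes\mu$; that is, $\mathsf H$ is equivalent to the $\Ga$-invariant measure $e^{2\delta(\cdot|\cdot)_o}\,d\mu\,d\mu$ underlying $m_\mu^{\BMS}$. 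As $\Ga$ acts ergodically on $\partial^{(2)}_\infty X$ for this measure (divergence type again), the two equivalent $\Ga$-invariant measures are proportional; after rescaling $\mathsf H$, the Bowen--Margulis--Sullivan measure $m_\mu^{\BMS}$ is exactly the flow-invariant measure on $\mathrm{T}^1(\Ga\ba X)$ carried by the random walk.

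Finally the first-moment hypothesis $\int\dist_X(o,go)\,d\mathsf m(g)<+\infty$ enters to close the loop. By Kingman's subadditive ergodic theorem the walk has a positive linear drift $\ell=\lim_n\frac1n\dist_X(o,Z_no)$, so the piecewise-geodesic sample path through $(Z_no)_{n\in\Zb}$ meets the orbit $\Ga(o)$ with linear frequency $\asymp 1/\ell$ in arclength and sublinearly tracks the geodesic line $(Z^-_\infty,Z^+_\infty)$. Writing $\bar o$ for the image of $o$ in $\Ga\ba X$, the identification of the previous step says that the arclength occupation statistics of this path along $\mathrm{T}^1(\Ga\ba X)$ are governed by $m_\mu^{\BMS}$; but the linear return rate forces a positive lower occupation density in a fixed bounded neighborhood of $\bar o$, whereas conservativity and ergodicity of the \emph{infinite} measure $m_\mu^{\BMS}$ force, via Hopf's ratio ergodic theorem, that density to vanish along $m_\mu^{\BMS}$-a.e.\ geodesic. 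This contradiction shows $m_\mu^{\BMS}(\mathrm{T}^1(\Ga\ba X))<+\infty$, and hence $\nu$ is singular to $\mu$.

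I expect the last step to be the main obstacle. A first moment provides only sublinear --- not bounded --- geodesic tracking, so the naive occupation-time comparison is not literally valid, and one needs a more robust mechanism: either controlling deviations at logarithmic scale, or, better, bypassing tracking entirely by using the harmonic-current $=$ Bowen--Margulis--Sullivan identification to express the $m_\mu^{\BMS}$-mass of a fundamental domain as an expectation over the random walk --- essentially an expected return time --- which is finite precisely because $\mathsf m$ has a finite first moment. Making this estimate precise uniformly over the (possibly non-compact, cusped or geometrically infinite) geometry of $\Ga\ba X$ is where the real work lies.
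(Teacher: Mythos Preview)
Your overall contradiction strategy --- Hopf's ratio ergodic theorem forces the geodesic occupation density in any bounded set to vanish $\mu$-a.e.\ once $m_\mu^{\BMS}$ is conservative ergodic and infinite, while the random walk should force it to be positive $\nu$-a.e.\ --- is exactly the paper's. But the route through Theorem~\ref{thm.general PS}, Roblin's uniqueness, the upgrade $\nu\sim\mu$, and the harmonic-current identification is unnecessary. The paper uses only that $\nu(\La^{\rm con}(\Ga))=1$ (Corollary~\ref{cor:conical has positive measure}, whose proof in rank one needs only non-elementary, not Zariski dense), so non-singularity gives $\mu(\La^{\rm con}(\Ga))>0$, and then the Hopf--Tsuji--Sullivan dichotomy already yields conservativity and ergodicity of $m_\mu^{\BMS}$. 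No equivalence $\nu\sim\mu$ and no two-sided walk are invoked: mere non-singularity suffices, since a $\nu$-full set must have positive $\mu$-measure.

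The step you flag as the obstacle is where the paper's argument departs from your sketch, and it is resolved by a tool you do not mention: B\'enard's theorem (Theorem~\ref{thm:conical RW}) gives not merely sublinear tracking but \emph{positive-frequency conicality} --- for a.e.\ sample path, a positive proportion of the points $\omega_n o$ lie within a fixed distance $R_0$ of the ray $\sigma_{x_\omega}$. Combined with the linear drift $\ell(\mathsf m)>0$ from the first-moment hypothesis, the paper runs a direct counting argument: by Birkhoff for the shift one finds $k$ so that a positive proportion of these good times $n$ also satisfy $S^n\mathbf g\in A_k:=\{\mathbf g:\dist_X(o,g_1\cdots g_j o)>4R_0\text{ for all }j\ge k\}$; extracting a maximal $k$-separated subsequence $n_1<n_2<\cdots$ then forces the foot-point times $t_j$ (with $\dist_X(\omega_{n_j}o,\sigma_x(t_j))\le R_0$) to be pairwise at distance $>2R_0$, so the intervals $[t_j-R_0,t_j+R_0]$ are disjoint and each contributes $2R_0$ to $\int_0^T\mathbf 1_{\Ga\cdot B_{2R_0}(o)}(\sigma_x(t))\,dt$. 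Since $t_j\sim \ell(\mathsf m)\,n_j$ and $j/n_j$ is bounded below, the occupation density has positive $\liminf$ for $\nu$-a.e.\ $x$, contradicting Hopf. No harmonic-current identity, no return-time formula, and no control of the global geometry of $\Ga\ba X$ are required; your worry that a first moment gives ``only sublinear tracking'' is exactly what B\'enard's positive-frequency statement circumvents.
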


We can always take $\mu$ in the Lebesgue measure class on $\partial_{\infty} X$, and in this case $m_{\mu}^{\BMS}(\mathrm{T}^{1}(\Ga \ba X)) = + \infty$ if and only if $\Vol(\Ga \ba G) = + \infty$. Hence, Theorem \ref{thm:rank one Leb} follows from Theorem \ref{thm:rank one case}.

We remark that both Theorem \ref{thm.general PS} and Equation~\eqref{eqn:KZ_Green} also apply to rank one cases, regardless of the size of Bowen--Margulis--Sullivan measures.

\begin{remark} \label{rmk:general PS}
In case that
   \begin{itemize}
   \item $X$ is a proper geodesic Gromov hyperbolic space,
   \item  $\Ga < \Isom(X)$ is a finitely generated but not quasi-convex discrete subgroup,
   \item the support of $\mathsf{m}$ generates $\Ga$ as a \textbf{semigroup}, and
   \item $\mathsf{m}$ has a finite superexponential moment with respect to a \textbf{word metric} on~$\Ga$,
   \end{itemize}
   the singularity of the $\mathsf{m}$-stationary measure and Patterson--Sullivan measures of $\Ga$ on $\partial_{\infty} X$ were studied by several authors.

   When $\Ga$ is word hyperbolic, $X$ admits a geometric action, and $\mathsf{m}$ further has a symmetric and finite support, Blach{\`e}re--Ha\"issinsky--Mathieu~\cite{BHM2011} showed such a singularity. 
   When $\Ga$ acts geometrically finitely on $X$, this singularity is due to Gekhtman--Tiozzo~\cite{GT2020}.
   More generally, when $\Ga$ is relatively hyperbolic as an abstract group (e.g. any finitely generated Kleinian group), this was obtained in our earlier paper \cite{KZ_Rigidity}.

\end{remark}

\subsection*{Acknowledgements} 

We thank Hee Oh for helpful discussions. We also thank Timoth\'ee B\'enard and Ilya Gekhtman for suggesting the formulation of Theorems \ref{thm:rank one Leb} and \ref{thm:rank one case} and for helpful discussions on its proof. Kim extends his gratitude to his advisor Hee Oh for her encouragement and guidance. 

Kim thanks the University of Wisconsin--Madison for hospitality during the \emph{Midwest Summer School in Geometry, Topology, and Dynamics} in June 2025 (which was supported by grant DMS-2230900 from the National Science Foundation), where work on this project was finished. Zimmer was partially supported by a Sloan research fellowship and grant DMS-2105580 from the National Science Foundation.

\section{Discrete subgroups of Lie groups} \label{sec.Liegps}

We fix a Langlands decomposition $\Psf = \Msf \Asf \Nsf$ where $\Asf$ is a maximal real split torus of $\Gsf$, $\Msf$ is the maximal
compact subgroup of $\Psf$ commuting with $\Asf$, and $\Nsf$ is the unipotent radical of $\Psf$. The Furstenberg boundary is defined as
$$
\Fc := \Gsf / \Psf. 
$$

Let $\mfg$ and $\mfa$ be the Lie algebras of $\Gsf$ and $\Asf$ respectively. 
We denote by $\fa^*$ the space of all functionals $\mfa \to \R$.  Fix a maximal compact subgroup $\Ksf < \Gsf$ and a positive closed Weyl chamber $\mfa^+ \subset \mfa$ with $\Asf^+ := \exp \fa^+$ so that we have a Cartan decomposition $\Gsf = \Ksf \Asf^+ \Ksf$. Then let 
\begin{equation} \label{eqn:Cartan projection}
   \kappa:\Gsf\to\mathfrak{a}^+
\end{equation}
denote the associated Cartan projection, i.e. $\kappa(g) \in \mathfrak{a}^+$ is the unique element where $g = k (\exp \kappa(g) ) \ell$ for some $k, \ell \in \Ksf$.

When $\Gsf = \prod_{i=1}^m \Gsf_i$ is a product of simple Lie groups, we assume that 
$$
\fa = \oplus_{i=1}^m \fa_i, \quad \fa^+ = \oplus_{i=1}^m \fa_i^+, \quad \Ksf = \prod_{i=1}^m \Ksf_i, \quad \text{and} \quad \Psf =  \prod_{i=1}^m \Psf_i
$$
where $\fa_i$, $\fa_i^+$, $\Ksf_i$, and $\Psf_i$ are the corresponding objects for $\Gsf_i$.

\subsection{Shadows and conical limit sets} \label{subsec.convergence theta bdr}

Let $X := \Gsf/\Ksf$ be the associated symmetric space and fix the basepoint $o := [\id] \in \Gsf / \Ksf$. Fix a $\Ksf$-invariant norm $\| \cdot \|$ on $\fa$ induced from the Killing form, and let $\dist_X$ denote the $\Gsf$-invariant symmetric Riemannian metric on $X$ defined by 
$$
\dist_X(g o, h o) = \| \kappa(g^{-1} h) \|
$$
 for $g, h \in \Gsf$.

For $p \in X$ and $R > 0$, let 
$$
B_X(p, R) := \{ z \in X : \dist_X(p, z) < R\}.
$$
Then, for $q \in X$, the \emph{shadow} of $B_X(p, R)$ viewed from $q$ is 
\begin{equation} \label{eqn:shadow def}
    \Oc_R(q, p)   := \{ g\Psf \in \Fc : g \in \Gsf, \ go = q, \ gA^+o \cap B_X(p, R) \neq \emptyset \}.
\end{equation}
Given a subgroup $\Ga < \Gsf$, we define the \emph{conical limit set} of $\Ga$ as follows:
$$
\La^{\rm con}(\Ga) := \left\{ x \in \Fc : \begin{matrix}
\exists \ R > 0 \text{ and an infinite sequence } \{\ga_n\} \subset \Ga \\
\text{s.t. } x \in \Oc_R(o, \ga_n o) \text{ for all } n \ge 1
\end{matrix}  \right\}.
$$

\subsection{Iwasawa cocycles and Patterson--Sullivan measures} \label{subsec:PS}
Using the identification $\Fc = \Ksf/\Msf$, the \emph{Iwasawa cocycle} $B : \Gsf \times \Fc \to \fa$ is defined as follows: for $g \in \Gsf$ and $x \in \Fc$, fix $k \in \Ksf$  such that $k \Msf = x$ and let $B(g, x) \in \fa$ be the unique element such that
$$
gk \in \Ksf \left( \exp B(g, x) \right) \Nsf.
$$
Then $B(\cdot, \cdot)$ satisfies the cocycle relation: for any $x \in \Fc$ and $g_1, g_2 \in \Gsf$,
$$
B(g_1 g_2, x) = B(g_1, g_2 x) + B(g_2, x).
$$
Recall that Patterson--Sullivan measures in higher rank were defined, using the Iwasawa cocycle, in Equation~\eqref{eqn:PS intro}.

Let $\Leb$ be the unique $\Ksf$-invariant probability measure on $\Fc$, which is induced by a smooth volume form and hence in the Lebesgue measure class. A crucial observation in our proof is that $\Leb$ is a Patterson--Sullivan measure.

\begin{lemma} \cite[Lemma 6.3]{Quint_PS} \label{lem:Leb PS}
   The measure $\Leb$ is a 1-dimensional $\Psi_{\Gsf}$-Patterson--Sullivan measure of $\Gsf$, where $$
\Psi_{\Gsf} \in \mfa^*
$$
is the sum of all positive roots for $(\mfg, \mfa^+)$.
\end{lemma}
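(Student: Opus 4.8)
The plan is to prove the stated Patterson--Sullivan identity by computing the Radon--Nikodym derivative $\frac{dg_*\Leb}{d\Leb}$ explicitly and reducing the entire computation to the base point $\xi_0 := e\Psf \in \Fc$. Since $\Gsf$ acts smoothly and transitively on $\Fc$ and $\Leb$ is a smooth measure of full support, every $g_* \Leb$ is absolutely continuous with respect to $\Leb$, and $\frac{dg_*\Leb}{d\Leb} = J_{g^{-1}}$, where for $h \in \Gsf$ I write $J_h \colon \Fc \to \Rb_{>0}$ for the Jacobian of the diffeomorphism $x \mapsto hx$ relative to $\Leb$ (if $\Fc$ is not orientable, read ``$\Ksf$-invariant smooth density'' for ``volume form''). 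By the chain rule, $(h,x) \mapsto J_h(x)$ is a multiplicative cocycle over the $\Gsf$-action: $J_{h_1 h_2}(x) = J_{h_1}(h_2 x)\, J_{h_2}(x)$; and the cocycle relation for the Iwasawa cocycle together with the linearity of $\Psi_{\Gsf}$ show that $(h,x) \mapsto e^{-\Psi_{\Gsf}(B(h,x))}$ is a multiplicative cocycle of the same type. Because the $\Gsf$-action is transitive, any such cocycle is determined by its values $h \mapsto J_h(\xi_0)$ at the base point. Hence it suffices to prove
\[
J_h(\xi_0) = e^{-\Psi_{\Gsf}(B(h, \xi_0))} \qquad \text{for all } h \in \Gsf .
\]

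To do this, I would Iwasawa-decompose $h = \kappa\, e^{H}\, n$ with $\kappa \in \Ksf$, $H \in \mfa$, and $n \in \Nsf$, so that $H = B(h, \xi_0)$ by definition of the Iwasawa cocycle. As $e^{H}$ and $n$ lie in $\Psf$ and therefore fix $\xi_0$, the cocycle relation collapses to $J_h(\xi_0) = J_\kappa(\xi_0)\, J_{e^{H}}(\xi_0)\, J_n(\xi_0)$. Now $J_\kappa(\xi_0) = 1$ since $\Leb$ is $\Ksf$-invariant; $J_n(\xi_0) = 1$ since $n$ fixes $\xi_0$ and acts on $T_{\xi_0}\Fc \cong \mfg/\mfp$ through $\mathrm{Ad}(n) \bmod \mfp$, which is unipotent; and $J_{e^{H}}(\xi_0) = \det\big(\mathrm{Ad}(e^{H})|_{\mfg/\mfp}\big)$, which under the $\mathrm{Ad}(\Asf)$-equivariant identification $\mfg/\mfp \cong \mathfrak{n}^- = \bigoplus_{\alpha} \mathfrak{g}_{-\alpha}$ (the sum running over positive restricted roots $\alpha$) equals $\prod_{\alpha} e^{-(\dim \mathfrak{g}_\alpha)\, \alpha(H)} = e^{-\Psi_{\Gsf}(H)}$, since $\Psi_{\Gsf}$ is the sum of the positive roots counted with multiplicity. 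Multiplying the three factors gives $J_h(\xi_0) = e^{-\Psi_{\Gsf}(H)} = e^{-\Psi_{\Gsf}(B(h,\xi_0))}$, and taking $h = g^{-1}$ yields $\frac{dg_*\Leb}{d\Leb}(x) = e^{-\Psi_{\Gsf}(B(g^{-1},x))}$, which is exactly \eqref{eqn:PS intro} with $C = 1$ and $\delta = 1$, $\phi = \Psi_{\Gsf}$.

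I do not anticipate a serious obstacle: the substance is the reduction to $\xi_0$ and the root-space determinant, both routine. The point that needs care is the bookkeeping of conventions --- on which side the Iwasawa cocycle sits, the inversion $g \leftrightarrow g^{-1}$ relating $g_*\Leb$ to $J_{g^{-1}}$, and the sign normalization of the restricted roots so that $\det(\mathrm{Ad}(e^{H})|_{\mathfrak{n}^-}) = e^{-\Psi_{\Gsf}(H)}$ rather than $e^{+\Psi_{\Gsf}(H)}$. As a sanity check one can run the case $\Gsf = \SL(2,\Rb)$, where $\Fc = \partial_\infty \Hb^2$, $\Psi_{\Gsf}$ is the unique positive root, and $\Leb$ is the round measure on the circle; this fixes all the signs.
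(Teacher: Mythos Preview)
Your argument is correct and is essentially the standard proof. The paper does not prove this lemma at all; it merely cites \cite[Lemma~6.3]{Quint_PS}. Your computation---reducing via the cocycle identities to the Jacobian of the $\Psf$-action at the basepoint $\xi_0$, and then reading off the determinant of $\mathrm{Ad}(e^{H})$ on $\mathfrak{g}/\mathfrak{p} \cong \mathfrak{n}^-$---is exactly the expected route and matches how one would prove the cited lemma from scratch. The only point worth flagging is terminological: the paper's phrase ``sum of all positive roots'' must be read as the sum \emph{with multiplicities}, i.e.\ $\Psi_{\Gsf} = \sum_{\alpha>0} (\dim \mathfrak{g}_\alpha)\,\alpha = 2\rho$, which is precisely what your determinant computation produces.
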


Next we recall a version of the classical Shadow Lemma. The following was  proved in \cite{LO_Invariant}  for non-coarse Patterson--Sullivan measures, but the same proof works for coarse ones (see also \cite[Theorem 3.3]{Albuquerque_PS} for the Shadow Lemma for a smaller class of functionals).

\begin{lemma}[{Shadow Lemma, \cite[Lemma 7.8]{LO_Invariant}}] \label{lem.shadow lemma}
   Let $\Ga < \Gsf$ be a Zariski dense discrete subgroup and $\mu$ a $\delta$-dimensional coarse $\phi$-Patterson--Sullivan measure of $\Ga$ on $\Fc$, for $\phi \in \mfa^*$ and $\delta \ge 0$. Then for any $R > 0$ large enough, there exists $C = C(R) > 1$ such that
   $$
   \frac{1}{C} e^{- \delta \phi (\kappa(\ga))} \le \mu(\Oc_R(o, \ga o)) \le C e^{- \delta \phi  (\kappa(\ga))} \quad \text{for all} \quad \ga \in \Ga.
   $$
\end{lemma}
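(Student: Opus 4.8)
The plan is to mimic the classical Sullivan shadow lemma argument, using the quasi-invariance relation \eqref{eqn:PS intro} together with a compactness argument to control the Radon--Nikodym derivative uniformly over shadows. Fix $R>0$ large (to be constrained below). Given $\gamma \in \Gamma$, write $\gamma = k_\gamma (\exp \kappa(\gamma)) \ell_\gamma$ with $k_\gamma, \ell_\gamma \in \Ksf$ from the Cartan decomposition. The idea is that the shadow $\Oc_R(o,\gamma o)$ is, up to bounded error, the image under $\gamma$ of a ball around the point $\xi_\gamma^- := \ell_\gamma^{-1}\Psf \in \Fc$ (the ``direction back towards $o$''), while its complement is mapped by $\gamma^{-1}$ into a fixed-size neighborhood of another point. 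More precisely, I would first establish the geometric facts: there is $R_0 = R_0(\Gsf)$ and, for each $R \ge R_0$, constants $0 < r < R'$ depending only on $R$ (not on $\gamma$) such that
\[
\gamma\bigl( O_{r}(o, o) \cdot \ell_\gamma^{-1} \bigr) \subset \Oc_R(o,\gamma o) \subset \gamma\bigl( O_{R'}(o,o)\cdot \ell_\gamma^{-1}\bigr),
\]
where I am abusing notation to mean suitable $\Ksf$-translates of a fixed neighborhood of the base direction in $\Fc$. This is a standard consequence of the structure of the Cartan decomposition and the fact that $\gamma A^+ o$ is the geodesic ``ray'' from $\gamma o$ in the direction recorded by $\ell_\gamma$; the key point is that enlarging the radius by $R$ in $X$ only enlarges the angular aperture at $\gamma o$ by an amount that stays bounded as $\dist_X(o,\gamma o) \to \infty$, with the bound depending only on $R$.

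Next, for the measure estimate, I would use \eqref{eqn:PS intro} applied to $g = \gamma^{-1}$. For $\mu$-a.e.\ $x$ lying in the preimage $\gamma^{-1}\Oc_R(o,\gamma o)$ — which by the above is contained in a fixed-radius neighborhood $N$ of the base direction depending only on $R$ — we have the two-sided bound
\[
\frac{1}{C_0}\, e^{-\delta\phi(B(\gamma,x))} \le \frac{d\gamma_*\mu}{d\mu}(x) \le C_0\, e^{-\delta\phi(B(\gamma,x))}.
\]
Now $B(\gamma, x) = \kappa(\gamma) + O(1)$ uniformly for $x$ ranging over the compact set $N$: indeed, writing $\gamma k = k_\gamma (\exp\kappa(\gamma)) \ell_\gamma k$ and noting that for $x$ near the base direction the element $\ell_\gamma k$ lies in a compact subset of $\Ksf$ that is ``transverse'' to $\Nsf$ in the relevant sense, the Iwasawa projection of $\ell_\gamma k \exp(\kappa(\gamma))$ differs from $\kappa(\gamma)$ by a bounded amount (this is the usual ``$\kappa = B + O(1)$ on a compact set of transverse directions'' estimate, and it is where $R$ must be taken large enough that the shadows avoid the singular directions). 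Hence $\phi(B(\gamma,x)) = \phi(\kappa(\gamma)) + O_\phi(1)$, giving
\[
\mu\bigl(\Oc_R(o,\gamma o)\bigr) = \gamma_*\mu\bigl(\gamma^{-1}\Oc_R(o,\gamma o)\bigr) = \int_{\gamma^{-1}\Oc_R(o,\gamma o)} \frac{d\gamma_*\mu}{d\mu}\, d\mu \asymp e^{-\delta\phi(\kappa(\gamma))}\, \mu\bigl(\gamma^{-1}\Oc_R(o,\gamma o)\bigr),
\]
with implied constants depending only on $R$, $\phi$, $\delta$, $C_0$.

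It remains to bound $\mu\bigl(\gamma^{-1}\Oc_R(o,\gamma o)\bigr)$ above and below by positive constants depending only on $R$. The upper bound is immediate since $\mu$ is a probability measure. For the lower bound, by the first geometric inclusion above $\gamma^{-1}\Oc_R(o,\gamma o) \supset O_r(o,o)\cdot \ell_\gamma^{-1}$, which is a $\Ksf$-translate of a fixed open neighborhood $U_r$ of the base direction; so I need $\inf_{k\in\Ksf}\mu(k U_r) > 0$. This is the one place Zariski density is used: the support of $\mu$ cannot be contained in a proper subflag, hence $\mu$ gives positive mass to every non-empty open set, and then the compactness of $\Ksf$ together with semicontinuity of $k \mapsto \mu(kU_r)$ (one can shrink $U_r$ slightly to a compact sub-neighborhood to get lower semicontinuity from the portmanteau theorem) yields a uniform positive lower bound. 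The main obstacle, and the step requiring the most care, is precisely the uniform estimate $B(\gamma, x) = \kappa(\gamma) + O(1)$ for $x$ in the relevant shadow-preimage: one must choose $R$ large enough and check that the shadow of a large ball, pulled back by $\gamma^{-1}$, stays inside a fixed compact set of directions that are uniformly transverse to the unipotent $\Nsf$, so that the Iwasawa and Cartan projections agree up to a bounded error independent of $\gamma$. Once this is in hand, combining the displays gives $\frac{1}{C}e^{-\delta\phi(\kappa(\gamma))} \le \mu(\Oc_R(o,\gamma o)) \le C e^{-\delta\phi(\kappa(\gamma))}$ for a constant $C = C(R) > 1$, as claimed; the coarse hypothesis (allowing $C_0 > 1$ in \eqref{eqn:PS intro}) only affects the final constant and not the structure of the argument, so the proof of \cite[Lemma 7.8]{LO_Invariant} goes through verbatim.
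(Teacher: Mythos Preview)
The paper does not prove this lemma; it is quoted from \cite[Lemma 7.8]{LO_Invariant} with the remark that the argument there extends verbatim to the coarse case. Your outline follows the standard Sullivan strategy, and the overall plan (quasi-invariance, Iwasawa--Cartan comparison on the pulled-back shadow, uniform two-sided bounds on its $\mu$-mass) is the right one.

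However, your geometry of the preimage is wrong, and this produces a genuine gap. You assert that $\gamma^{-1}\Oc_R(o,\gamma o)$ is sandwiched between two $\Ksf$-translates of a fixed \emph{small} neighborhood of the base direction. In fact $\gamma^{-1}\Oc_R(o,\gamma o)=\Oc_R(\gamma^{-1}o,o)$ is a \emph{large} set: it is the complement in $\Fc$ of a shrinking neighborhood of a proper subvariety (in rank one, the complement of a ball of radius comparable to $e^{-R}$ about the boundary point nearest $\gamma^{-1}o$). This misconception infects two steps. First, your justification of $\|B(\gamma,x)-\kappa(\gamma)\|=O(1)$ via ``$x$ lies in a fixed compact set transverse to $\Nsf$'' is not the right mechanism, since the preimage is not small; the estimate does hold on $\Oc_R(\gamma^{-1}o,o)$, but the reason is that the Weyl chamber from $\gamma^{-1}o$ toward $x$ passes within $R$ of $o$, which directly forces the Iwasawa and Cartan projections to agree up to $O_R(1)$.

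Second, and more seriously, your lower-bound argument fails. You claim Zariski density of $\Gamma$ forces $\mu$ to charge every nonempty open set, but this is false: for instance, for an Anosov subgroup $\mu$ is supported on the limit set, a proper closed subset of $\Fc$, and then $\inf_{k\in\Ksf}\mu(kU_r)=0$ for any small $U_r$. The correct argument (as in \cite{LO_Invariant}, following Quint and Albuquerque) is dual to yours: one shows that $\Fc\smallsetminus\Oc_R(\gamma^{-1}o,o)$ is contained, uniformly in $\gamma$, in an $R$-dependent neighborhood of a proper Zariski-closed subset of $\Fc$. Zariski density of $\Gamma$ implies $\mu$ assigns zero mass to any such subvariety, and a compactness argument then gives $\mu\bigl(\Oc_R(\gamma^{-1}o,o)\bigr)\ge 1/2$ for all $\gamma\in\Gamma$ once $R$ is large enough. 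With this correction in place, the rest of your outline goes through.
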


\begin{remark} There are a number of other versions of shadows and Shadow Lemmas in higher rank, see for instance  \cite{Quint_PS,MR4668098}. 
\end{remark}

\subsection{Random walks} \label{subsec:RW} In this section we recall some classical properties of random walks on semisimple Lie groups and a result of B\'enard. For more background, see \cite{Furman_survey, BQ_book}.

Let $\Ga < \Gsf$ be a Zariski dense subgroup. Let $\mathsf{m}$ be a probability measure on $\Gsf$ whose support generates $\Ga$ as a group, i.e.,
$$
 \langle \supp \mathsf{m} \rangle = \Ga.
$$
In this case, the semigroup generated by $\supp \mathsf{m}$ is Zariski dense as well \cite[Lemma 6.15]{BQ_book}.

Consider the random walk
$$\omega_n = \ga_1 \cdots \ga_n$$
 where the $\ga_i$'s are independent identically
distributed elements of $\Ga$ each with distribution $\mathsf{m}$.
For each $n \in \Nb$, we fix a Cartan decomposition
$$
\omega_n = k_n (\exp \kappa(\omega_n)) \ell_n \in \Ksf \Asf^+ \Ksf
$$
so that $k_n \in \Ksf$ and $\kappa(\omega_n) \in \fa^+$ are random variables. 

We will use the following well-known fact about the convergence of random products (for a proof, see for instance \cite[Lemmas 2.1, 2.8]{Benard_RW}). 

\begin{lemma}\label{lem:convergence} 
   For almost every sample path $\omega = (\omega_n) \in \Gsf^{\Nb}$, there exists $x_\omega \in \Fc$ such that as $n \to + \infty$,
   $$
   k_n \Psf \to x_\omega \quad \text{in} \quad \F \quad \text{and} \quad \alpha(\kappa(\omega_n)) \to + \infty \quad \text{for all simple roots} \quad \alpha \in \mfa^*.
   $$
\end{lemma}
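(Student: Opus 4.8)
The plan is to reduce the two convergences to the classical theory of products of random matrices, applied to the fundamental representations of $\Gsf$. I would first record what is already available without any moment assumption: by Furstenberg--Guivarc'h--Raugi and Goldsheid--Margulis \cite{Furstenberg_boundary, GR_Furstenberg, GolMar1989}, $(\Fc,\nu)$ is a $\mathsf{m}$-boundary, so for $\mathsf{m}^{\otimes\Nb}$-almost every $\omega$ the translates $(\omega_n)_*\nu$ converge weakly to a Dirac mass $\delta_{x_\omega}$, with $\omega\mapsto x_\omega$ measurable. Granting this, it suffices to show, separately for each of the finitely many simple roots $\alpha$, that $\alpha(\kappa(\omega_n))\to+\infty$ and that the $\alpha$-coordinate of $k_n\Psf$ converges to the $\alpha$-coordinate of $x_\omega$.

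For a fixed simple root $\alpha$, I would pass to the irreducible representation $\rho_\alpha\colon\Gsf\to\GL(V_\alpha)$ of highest weight $\varpi_\alpha$, the fundamental weight dual to $\alpha^\vee$, equipped with a $\Ksf$-invariant inner product adapted to the Cartan decomposition (so $\rho_\alpha(\Ksf)$ is orthogonal and $\rho_\alpha(\exp H)$ is positive self-adjoint for $H\in\fa$). A standard weight computation then gives, for every $g\in\Gsf$, that the largest singular value of $\rho_\alpha(g)$ is $e^{\varpi_\alpha(\kappa(g))}$, attained on the one-dimensional highest weight space, and---because $\varpi_\alpha$ is fundamental, so that every other weight of $V_\alpha$ has the form $\varpi_\alpha-\alpha-(\text{nonnegative combination of simple roots})$---the second largest is $e^{(\varpi_\alpha-\alpha)(\kappa(g))}$; hence
$$
\log\sigma_1(\rho_\alpha(g))-\log\sigma_2(\rho_\alpha(g))=\alpha(\kappa(g)).
$$
Since $\Ga$ is Zariski dense and $\rho_\alpha$ is irreducible, $\rho_\alpha(\Ga)$ is strongly irreducible, and it is proximal because $\Ga$ contains loxodromic elements whose $\rho_\alpha$-images are proximal matrices. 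Consequently there is a unique $\rho_\alpha(\mathsf{m})$-stationary measure $\nu_\alpha$ on $\Pb(V_\alpha)$, it charges no projective hyperplane (Furstenberg's lemma), and by uniqueness it coincides with the pushforward of $\nu$ under the $\Gsf$-equivariant map $\Fc\to\Pb(V_\alpha)$, $g\Psf\mapsto\rho_\alpha(g)[v_\alpha^+]$, where $[v_\alpha^+]$ is the highest weight line. Therefore $(\rho_\alpha(\omega_n))_*\nu_\alpha\to\delta_{x_\omega^\alpha}$ a.s., $x_\omega^\alpha$ being the image of $x_\omega$.

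I would finish with two compactness arguments, writing $\rho_\alpha(\omega_n)=K_nA_nL_n$ for a singular value decomposition and using repeatedly that $\nu_\alpha$ charges no hyperplane. First: if $\sigma_1/\sigma_2$ of $\rho_\alpha(\omega_n)$ failed to tend to $+\infty$, then along a subsequence $A_n/\sigma_1(\rho_\alpha(\omega_n))$, $K_n$ and $L_n$ would converge, the first to a diagonal operator of rank $\ge 2$, and pushing $\nu_\alpha$ forward along this subsequence would converge to a non-atomic probability measure, contradicting convergence to $\delta_{x_\omega^\alpha}$. Hence $\alpha(\kappa(\omega_n))\to+\infty$; in particular $\kappa(\omega_n)$ lies in the interior of $\mfa^+$ for all large $n$, so $k_n$ is then unique up to $\Msf$, $k_n\Psf$ is well-defined, and $\rho_\alpha(k_n)[v_\alpha^+]$ is the line of first singular vectors of $\rho_\alpha(\omega_n)$. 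Second: since $\sigma_i/\sigma_1\to 0$ for $i\ge 2$, the normalized $A_n/\sigma_1(\rho_\alpha(\omega_n))$ tends to the projection onto $[e_1]$, so $(\rho_\alpha(\omega_n))_*\nu_\alpha$ is asymptotic to $\delta_{\rho_\alpha(k_n)[v_\alpha^+]}$, and comparison with $\delta_{x_\omega^\alpha}$ forces $\rho_\alpha(k_n)[v_\alpha^+]\to x_\omega^\alpha$. Letting $\alpha$ vary and using that $g\Psf\mapsto(\rho_\alpha(g)[v_\alpha^+])_\alpha$ is a closed embedding of $\Fc$ into $\prod_\alpha\Pb(V_\alpha)$, the coordinatewise convergence yields $k_n\Psf\to x_\omega$ in $\Fc$.

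The main obstacle is the absence of a moment hypothesis: with a finite first moment one could simply invoke the law of large numbers $\tfrac1n\kappa(\omega_n)\to\lambda$, with $\lambda$ in the interior of $\mfa^+$, which gives $\alpha(\kappa(\omega_n))\sim n\,\alpha(\lambda)\to+\infty$ at once. The content of the argument above is that the two facts actually needed---that the top singular gap diverges and that the top singular flag converges---depend only on the proximal contraction dynamics on the spaces $\Pb(V_\alpha)$ and on the (moment-free) martingale convergence of $(\rho_\alpha(\omega_n))_*\nu_\alpha$; no speed or integrability estimate enters, which is precisely why the lemma holds for arbitrary $\mathsf{m}$.
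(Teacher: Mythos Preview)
Your proof is correct. The paper itself does not prove this lemma: it records the statement as well known and points to \cite[Lemmas~2.1, 2.8]{Benard_RW} for a proof. Your route---linearizing through the fundamental representations $\rho_\alpha$, identifying $\alpha(\kappa(g))$ with the top singular-value gap of $\rho_\alpha(g)$, and then combining the moment-free martingale convergence $(\omega_n)_*\nu\to\delta_{x_\omega}$ with the proximal/strongly irreducible dynamics on each $\Pb(V_\alpha)$---is precisely the standard argument (as in \cite{BQ_book} and in the cited \cite{Benard_RW}), so there is no substantive difference to compare. One minor sharpening: in your first compactness step you actually use that $\nu_\alpha$ charges no proper projective \emph{subspace}, not merely no hyperplane, so that the pushforward through a rank~$\ge 2$ limit is genuinely non-atomic; this is still Furstenberg's lemma, just invoked in full strength.
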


Continuing the notation in the above lemma, B\'enard showed that the convergence is conical as follows (we rephrase this result in terms of shadows).

\begin{theorem} \cite[Theorem A bis]{Benard_RW} \label{thm:conical RW}
   For any $\epsilon > 0$, there exists $R > 0$ such that for almost every $\omega = (\omega_n) \in \Gsf^{\Nb}$, 
   $$
\liminf_{N \to + \infty} \frac{1}{N} \# \{ 1 \le n \le N : x_\omega \in \Oc_R(o, \omega_n o) \} > 1 - \epsilon.
   $$
\end{theorem}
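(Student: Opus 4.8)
The plan is to move the event $\{x_\omega\in\Oc_R(o,\omega_n o)\}$ through the random walk until it becomes the statement that a ``fresh'' future increment avoids a small set built from the (independent) past, and then to apply the pointwise ergodic theorem on the two-sided Bernoulli space. Throughout, $T$ denotes the shift on $\Gsf^{\Nb}$, and for $g\in\Gsf$ with $\kappa(g)$ regular I write $\eta(g):=k_g\Psf\in\Fc$ for its attracting flag, where $g=k_g(\exp\kappa(g))\ell_g$ is a Cartan decomposition; I also fix an auxiliary Riemannian distance $\dist_{\Fc}$ on $\Fc$ and write $B(\xi,r)$ for balls in $\Fc$.

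First I would record the covariance $g\cdot\Oc_R(p,q)=\Oc_R(gp,gq)$ for $g\in\Gsf$, which is immediate from~\eqref{eqn:shadow def}. Together with the cocycle identity $x_\omega=\omega_n\cdot x_{T^n\omega}$ for the boundary map (here $x_{T^n\omega}=\lim_m\ga_{n+1}\cdots\ga_m\Psf$, defined a.e.\ by Lemma~\ref{lem:convergence}), this rewrites the event as
\[
x_\omega\in\Oc_R(o,\omega_n o)\quad\Longleftrightarrow\quad x_{T^n\omega}\notin W_n(R),\qquad W_n(R):=\omega_n^{-1}\big(\Fc\smallsetminus\Oc_R(o,\omega_n o)\big),
\]
so it suffices to bound, for a.e.\ $\omega$, the upper density of $\{n:x_{T^n\omega}\in W_n(R)\}$ by $\epsilon$ once $R$ is large. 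The next step is to show that $W_n(R)$ is \emph{uniformly} small: there are $c,C>0$ depending only on $\Gsf$ and $\dist_{\Fc}$ with $W_n(R)\subseteq B\big(\eta(\omega_n^{-1}),\,Ce^{-cR}\big)$ for every $n$ and a.e.\ $\omega$. This is the standard bookkeeping between shadows, the Cartan projection and the action on $\Fc$: $\Oc_R(o,\omega_n o)$ is comparable to an ellipsoidal neighborhood of $\eta(\omega_n)$ whose radius in each simple-root direction $\alpha$ is $\asymp e^{c_\alpha R}e^{-\alpha(\kappa(\omega_n))}$, while $\omega_n^{-1}$ contracts the complement of any $\delta$-ball about its repelling flag $\eta(\omega_n)$ into a ball of radius $\asymp\delta^{-1}e^{-\alpha'(\kappa(\omega_n^{-1}))}$ about $\eta(\omega_n^{-1})$; since $\kappa(\omega_n^{-1})=\opp(\kappa(\omega_n))$ and the opposition involution matches up the relevant roots, the two factors cancel and only $e^{-cR}$ survives. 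I would quote these contraction estimates from \cite{BQ_book}. Finally, since $\langle\supp\mathsf{m}\rangle$ is Zariski dense, the $\mathsf{m}$-stationary measure $\nu$ on $\Fc$ is non-atomic, and hence $\varpi(r):=\sup_{\xi\in\Fc}\nu(B(\xi,r))\to0$ as $r\to0$ (a near-optimal convergent sequence of centers would otherwise produce a $\nu$-atom); so $\nu\big(W_n(R)\big)\le\varpi(Ce^{-cR})$ for all $n$.

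The main obstacle is that $W_n(R)$ depends on the entire past $\ga_1,\dots,\ga_n$, so $n\mapsto\mathbf 1\big(x_{T^n\omega}\in W_n(R)\big)$ is not yet a Birkhoff average. To remedy this I would pass to the two-sided space $\wt\Omega=\Ga^{\Zb}$ with shift $\sigma$, and introduce alongside $x^+(\wt\omega)=\lim_m\ga_1\cdots\ga_m\Psf$ the backward boundary point $x^-(\wt\omega)=\lim_m\eta(\ga_0^{-1}\ga_{-1}^{-1}\cdots\ga_{-m}^{-1})$ --- a boundary point of a $\check{\mathsf{m}}$-random product, with $\check{\mathsf{m}}(E)=\mathsf{m}(E^{-1})$ again having Zariski dense support, hence defined a.e.; it depends only on the non-positive coordinates whereas $x^+$ depends only on the positive ones (so $x^+,x^-$ are independent and $x^+$ has law $\nu$), and it satisfies $x^-(\sigma^n\wt\omega)=\omega_n^{-1}x^-(\wt\omega)$. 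The key estimate is that $\eta(\omega_n^{-1})$ and $x^-(\sigma^n\wt\omega)$ are close for $n$ large: the repelling flag $\eta(\omega_n)$ of $\omega_n^{-1}$ converges to $x_\omega$ by Lemma~\ref{lem:convergence}, while $x^-(\wt\omega)\neq x_\omega$ a.s.\ (independence and non-atomicity), so the same contraction estimate gives $\dist_{\Fc}\!\big(\eta(\omega_n^{-1}),\,x^-(\sigma^n\wt\omega)\big)\lesssim e^{-\min_\alpha\alpha(\kappa(\omega_n^{-1}))}\to0$ a.s., with no moment hypothesis used (the divergence of the exponent being part of Lemma~\ref{lem:convergence}). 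Hence, for $n$ large depending on $\omega$,
\[
\mathbf 1\big(x_\omega\notin\Oc_R(o,\omega_n o)\big)\ \le\ \mathbf 1\Big(x^+(\sigma^n\wt\omega)\in B\big(x^-(\sigma^n\wt\omega),\,2Ce^{-cR}\big)\Big)=:G(\sigma^n\wt\omega),
\]
with $G$ a fixed measurable function satisfying $\mathbb E[G]=\mathbb E\big[\nu\big(B(x^-,2Ce^{-cR})\big)\big]\le\varpi(2Ce^{-cR})$. Applying Birkhoff's ergodic theorem to $G$ and the ergodic shift $\sigma$ then yields, for a.e.\ $\wt\omega$,
\[
\limsup_{N\to\infty}\frac1N\#\{1\le n\le N:x_\omega\notin\Oc_R(o,\omega_n o)\}\ \le\ \mathbb E[G]\ \le\ \varpi(2Ce^{-cR}),
\]
and choosing $R$ with $\varpi(2Ce^{-cR})<\epsilon$ completes the argument. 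In summary, the substantive work is Step~2 (the root cancellation making $W_n(R)$ uniformly small) and the stabilization $\eta(\omega_n^{-1})\approx x^-(\sigma^n\wt\omega)$ that converts a past-dependent target into a shift-equivariant one; everything else is the covariance of shadows, the cocycle identity, non-atomicity of $\nu$, and the ergodic theorem.
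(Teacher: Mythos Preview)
The paper does not prove this statement; it is quoted verbatim from B\'enard~\cite{Benard_RW}, so there is no in-paper argument to compare against. Your overall strategy---rewrite the event via shadow covariance and the cocycle $x_\omega=\omega_n\cdot x_{T^n\omega}$, then pass to the bilateral shift so that the target becomes shift-equivariant and Birkhoff applies---is the natural one, and in rank one your sketch is essentially correct.

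In higher rank, however, your Step~2 inclusion $W_n(R)\subseteq B\bigl(\eta(\omega_n^{-1}),Ce^{-cR}\bigr)$ is false. Take $\Gsf=\SL_3(\Rb)$ with $\omega_n=\mathrm{diag}(e^a,e^b,e^c)$, $a>b>c$, and let $F=(\Rb e_2,\ \Rb e_1\oplus\Rb e_2)\in\Fc$ be the torus-fixed flag obtained from the standard one by the transposition $e_1\leftrightarrow e_2$. Then $\omega_n^{-1}F=F$, and a direct singular-value computation gives $\inf_{u\in\fa^+}\dist_X(ke^uo,\omega_n o)\asymp a-b$ for $k\in\Ksf$ with $k\Psf=F$; hence $F\in\Fc\smallsetminus\Oc_R(o,\omega_n o)$ and so $F=\omega_n^{-1}F\in W_n(R)$ once $\alpha_1(\kappa(\omega_n))\gg R$, yet $F$ sits at a fixed positive $\dist_{\Fc}$-distance from $\eta(\omega_n^{-1})=(\Rb e_3,\ \Rb e_2\oplus\Rb e_3)$. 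The flaw in your ``root cancellation'' heuristic is that the complement of the shadow $\Oc_R(o,\omega_n o)$ contains flags lying in positive-codimension Schubert cells relative to $\eta(\omega_n)$; such flags are \emph{not} transverse to the repelling flag of $\omega_n^{-1}$, so $\omega_n^{-1}$ does not contract them toward $\eta(\omega_n^{-1})$ at all (in the example, it fixes $F$). Thus $W_n(R)$ is a thin tube around a union of Schubert varieties, not a metric ball, and your reduction to a single indicator $G$ on the bilateral space breaks down. Salvaging the approach requires uniform control of the $\nu$-mass of Schubert tubes around \emph{every} reference flag---considerably more than what can be quoted from~\cite{BQ_book}.
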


By work of Guivarc'h--Raugi \cite{GR_Furstenberg} and Gol'dshe{\u i}d--Margulis~\cite{GolMar1989}, there exists a unique $\mathsf{m}$-stationary measure $\nu$ on $\Fc$. Further, for a Borel subset $E \subset \Fc$, 
\begin{equation}\label{eqn:stat meas as hitting meas}
\nu(E) = \operatorname{Prob} \left( \omega = (\omega_n) \in \Gsf^{\Nb} : x_{\omega} \in E \right)
\end{equation} 
(see also~\cite[Proposition 10.1]{BQ_book}).

The following is a consequence of Lemma~\ref{lem:convergence}, Theorem~\ref{thm:conical RW}, and Equation~\eqref{eqn:stat meas as hitting meas}.

\begin{corollary}\label{cor:conical has positive measure}
    With the notation above,
   $$
   \nu(\La^{\rm con}(\Ga)) = 1.
   $$
\end{corollary}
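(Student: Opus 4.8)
The plan is to combine the escape-to-infinity statement in Lemma~\ref{lem:convergence}, B\'enard's conicality result Theorem~\ref{thm:conical RW}, and the identification of $\nu$ with the hitting measure in Equation~\eqref{eqn:stat meas as hitting meas}. First I would record what membership in $\La^{\rm con}(\Ga)$ amounts to: a point $x \in \Fc$ lies in $\La^{\rm con}(\Ga)$ exactly when there are an $R > 0$ and infinitely many \emph{pairwise distinct} $\ga \in \Ga$ (equivalently, with $\ga o$ leaving every compact subset of $X$, since $\Ga$ is discrete) such that $x \in \Oc_R(o,\ga o)$. I would also note that $\La^{\rm con}(\Ga)$ is Borel: fixing an enumeration $\Ga = \{\ga_j\}_{j \ge 1}$ with $\dist_X(o, \ga_j o) \to +\infty$ and using that $\Oc_R(o,\cdot)$ is monotone in $R$, one has
$$
\La^{\rm con}(\Ga) = \bigcup_{R \in \Nb} \ \bigcap_{m \ge 1} \ \bigcup_{j \ge m} \Oc_R(o, \ga_j o),
$$
and shadows are Borel.

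Next I would fix the radius. Applying Theorem~\ref{thm:conical RW} with $\epsilon = 1/2$ produces an $R > 0$ such that for almost every sample path $\omega = (\omega_n)$ the index set $S_\omega := \{ n \ge 1 : x_\omega \in \Oc_R(o, \omega_n o) \}$ has lower density $> 1/2$; in particular $S_\omega$ is infinite. Independently, Lemma~\ref{lem:convergence} gives, for almost every $\omega$, that $\alpha(\kappa(\omega_n)) \to +\infty$ for every simple root $\alpha$. Since the simple roots form a basis of $\fa^*$ and $\kappa(\omega_n) \in \fa^+$, this forces $\dist_X(o,\omega_n o) = \norm{\kappa(\omega_n)} \to +\infty$: a bounded subsequence of $\kappa(\omega_n)$ in $\fa^+$ would subconverge, contradicting that every simple root blows up along it.

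Then I would intersect these two full-measure events. For such an $\omega$, the subsequence $\{\omega_n o : n \in S_\omega\}$ is indexed by an infinite set and leaves every compact subset of $X$, so it hits infinitely many distinct points; choosing indices $n_1 < n_2 < \cdots$ in $S_\omega$ accordingly, the elements $\ga_k := \omega_{n_k} \in \Ga$ are pairwise distinct and satisfy $x_\omega \in \Oc_R(o, \ga_k o)$ for every $k$, hence $x_\omega \in \La^{\rm con}(\Ga)$. Thus $x_\omega \in \La^{\rm con}(\Ga)$ for almost every $\omega$, and Equation~\eqref{eqn:stat meas as hitting meas} yields
$$
\nu(\La^{\rm con}(\Ga)) = \operatorname{Prob}\left( \omega \in \Gsf^{\Nb} : x_\omega \in \La^{\rm con}(\Ga) \right) = 1.
$$

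The argument is essentially formal once Theorem~\ref{thm:conical RW} is in hand, so there is no serious obstacle; the only point needing a little care is to upgrade the positive-density family of indices coming from B\'enard's theorem into an infinite family of \emph{distinct} group elements (rather than a sequence that could stall at a fixed $\ga$, for which the shadow condition would be vacuous), and this is exactly where the divergence $\dist_X(o,\omega_n o) \to +\infty$ from Lemma~\ref{lem:convergence} is used. Confirming that shadows, and hence $\La^{\rm con}(\Ga)$, are Borel is routine.
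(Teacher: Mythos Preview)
Your argument is correct and follows exactly the route the paper indicates: it states the corollary as a direct consequence of Lemma~\ref{lem:convergence}, Theorem~\ref{thm:conical RW}, and Equation~\eqref{eqn:stat meas as hitting meas}, and your write-up simply fills in the details of that deduction, including the one nontrivial point (using $\norm{\kappa(\omega_n)}\to\infty$ to ensure the indices in $S_\omega$ yield infinitely many \emph{distinct} group elements).
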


\section{Gap theorem for the growth indicator function}

In proving Theorem \ref{thm:singularity discrete group}, we will employ the following result of Lee--Oh. Recall the functional $\Psi_{\Gsf}$ defined in Section~\ref{subsec:PS}.

\begin{proposition} \cite[Proposition 7.6]{LO_Dichotomy} \label{prop:LO76}
   Suppose $\Ga < \Gsf$ is a discrete subgroup such that $\psi_{\Ga} < \Psi_{\Gsf}$ on $\mfa^+ \smallsetminus \{0\}$. Then $$\Leb(\La^{\rm con}(\Ga)) = 0.$$
\end{proposition}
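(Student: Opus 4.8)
## Proof proposal for Proposition~\ref{prop:LO76}

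The plan is to prove the contrapositive: assuming $\Leb(\La^{\rm con}(\Ga)) > 0$, I would deduce that $\psi_{\Ga}(u) \ge \Psi_{\Gsf}(u)$ for some $u \in \mfa^+ \smallsetminus \{0\}$. The engine is the combination of the Shadow Lemma (Lemma~\ref{lem.shadow lemma}) applied to the Lebesgue measure $\Leb$, which by Lemma~\ref{lem:Leb PS} is a $1$-dimensional $\Psi_{\Gsf}$-Patterson--Sullivan measure of $\Gsf$ (and a fortiori a coarse $\Psi_{\Gsf}$-Patterson--Sullivan measure of $\Ga$), together with a Borel--Cantelli / covering argument relating the measure of the conical limit set to a growth series.

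\textbf{Step 1 (Shadow estimate).} Fix $R > 0$ large enough that Lemma~\ref{lem.shadow lemma} applies to $\mu = \Leb$, $\delta = 1$, $\phi = \Psi_{\Gsf}$: there is $C = C(R) > 1$ with $\tfrac1C e^{-\Psi_{\Gsf}(\kappa(\ga))} \le \Leb(\Oc_R(o,\ga o)) \le C e^{-\Psi_{\Gsf}(\kappa(\ga))}$ for all $\ga \in \Ga$.

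\textbf{Step 2 (Conical set as a $\limsup$).} By definition, $\La^{\rm con}(\Ga) = \bigcup_{R>0} \bigcap_{m \ge 1}\bigcup_{n \ge m}\Oc_R(o,\ga_n o)$ over infinite sequences; more usefully, for each fixed $R$ the set $\La^{\rm con}_R(\Ga) := \{x : x \in \Oc_R(o,\ga o)$ for infinitely many $\ga \in \Ga\}$ is contained in $\limsup_{\ga} \Oc_R(o,\ga o)$, and $\La^{\rm con}(\Ga) = \bigcup_{R \in \Nb}\La^{\rm con}_R(\Ga)$. Since $\Leb(\La^{\rm con}(\Ga)) > 0$, there is some $R$ with $\Leb(\La^{\rm con}_R(\Ga)) > 0$; enlarging $R$ we may assume the Shadow Lemma holds for this $R$. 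By the (easy, measure-theoretic) Borel--Cantelli lemma, $\Leb\big(\limsup_\ga \Oc_R(o,\ga o)\big) > 0$ forces $\sum_{\ga \in \Ga} \Leb(\Oc_R(o,\ga o)) = +\infty$, hence by Step 1
$$
\sum_{\ga \in \Ga} e^{-\Psi_{\Gsf}(\kappa(\ga))} = +\infty.
$$

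\textbf{Step 3 (From divergence of the series to the growth indicator).} It remains to convert this single divergent series into a statement at a point $u \in \mfa^+$. Cover $\mfa^+$ by finitely many open cones $\Cc_1,\dots,\Cc_k$ of small aperture; the divergent sum splits as a finite sum over $\{\ga : \kappa(\ga) \in \Cc_j\}$, so at least one piece $\sum_{\kappa(\ga)\in\Cc_j} e^{-\Psi_{\Gsf}(\kappa(\ga))}$ diverges. On a narrow cone around a unit vector $u$, the linear functional $\Psi_{\Gsf}$ is uniformly close to $\Psi_{\Gsf}(u)\cdot\norm{\cdot}$ (after normalizing $\norm{u}=1$), so divergence of that sum forces the critical exponent of $s \mapsto \sum_{\kappa(\ga)\in\Cc_j} e^{-s\norm{\kappa(\ga)}}$ to be at least $\Psi_{\Gsf}(u) - \varepsilon$, uniformly in the aperture. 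Taking apertures shrinking to $0$ and using the definition of $\psi_{\Ga}$ as an infimum over cones, one extracts (by a compactness argument on the sphere in $\mfa^+$, or by a diagonal argument) a unit vector $u_\infty \in \mfa^+$ with $\psi_{\Ga}(u_\infty) \ge \Psi_{\Gsf}(u_\infty)$, contradicting the hypothesis $\psi_{\Ga} < \Psi_{\Gsf}$ on $\mfa^+\smallsetminus\{0\}$. (This is essentially Quint's standard relation between divergence of a $\phi$-Poincaré series and the growth indicator.)

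\textbf{Main obstacle.} Steps 1 and 2 are routine given the cited lemmas; the delicate point is Step 3 — making the passage from "some $\Psi_{\Gsf}$-weighted Poincaré series over $\Ga$ diverges" to "$\psi_{\Ga} \ge \Psi_{\Gsf}$ somewhere" fully rigorous. One must be careful that the cone $\Cc_j$ carrying the divergent mass can be chosen with arbitrarily small aperture (not just one fixed finite cover), which requires a limiting/compactness argument: subdivide repeatedly, always following a cone on which the series diverges, and take a nested sequence of cones shrinking to a direction $u_\infty$. Keeping track of the uniform comparison between $\Psi_{\Gsf}$ and $\Psi_{\Gsf}(u)\norm{\cdot}$ as the aperture shrinks, and handling the boundary behavior if $u_\infty$ lands on a wall of $\mfa^+$, is where the real care is needed.
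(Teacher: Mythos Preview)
Your proposal is correct and follows the same route as the paper (which cites Lee--Oh for Proposition~\ref{prop:LO76} and reproduces the argument in Section~5 when proving Theorem~\ref{thm.general PS}): Shadow Lemma for $\Leb$, Borel--Cantelli on the $\limsup$ of shadows to get divergence of the $\Psi_{\Gsf}$-Poincar\'e series, then Quint's lemma to touch the growth indicator. Your ``main obstacle'' in Step~3 dissolves if you argue by contrapositive, exactly as in the paper's recalled proof of Lemma~\ref{lem:div implies comp to growth indicator}: assuming $\psi_{\Ga} < \Psi_{\Gsf}$ on all of $\mfa^+ \smallsetminus \{0\}$, for each unit vector $u$ choose a single open cone $\Cc_u \ni u$ on which the $\Psi_{\Gsf}$-series already converges (using the definition of $\psi_{\Ga}$ and continuity of the linear functional $\Psi_{\Gsf}$), and then take a finite subcover by compactness of the unit sphere in $\mfa^+$ --- no nested shrinking of cones or diagonal extraction of a limit direction is needed.
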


To apply Proposition \ref{prop:LO76}, we first prove the gap result for the growth indicator function as follows. While the following cannot be directly applied, we will proceed with a reduction argument to use it in the next section.

\begin{proposition}\label{prop:gap result} Suppose $\Gsf= \prod_{i =1}^m \Gsf_i$ is a product of  connected simple Lie groups of non-compact type with trivial centers and $\Gamma < \Gsf$ is a Zariski dense non-lattice discrete subgroup.   Assume
   \begin{enumerate} 
\item $\Gsf$ has property \emph{(T)}, and
\item for each $1 \leq i \leq m$ there is no non-zero $\Gsf_i$-invariant vector in $L^2(\Gamma \backslash \Gsf)$. 
\end{enumerate} 
Then 
$$
\psi_{\Ga} <  \Psi_{\Gsf} \quad \text{on} \quad \fa^+\smallsetminus\{0\}.
$$
\end{proposition}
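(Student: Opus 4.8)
The plan is to reduce the statement for a product $\Gsf = \prod_i \Gsf_i$ to the known simple-factor results of Quint \cite{Quint_Kazhdan} (higher rank) and Corlette \cite{Corlette_gap} (rank one). The key point is that the growth indicator of $\Gamma$ sits below the product of the growth indicators of its projections, so it suffices to prove a strict gap in each factor direction and then interpolate. Concretely, write $\mfa = \oplus_{i=1}^m \mfa_i$, $\mfa^+ = \oplus \mfa_i^+$, and let $p_i : \mfa \to \mfa_i$ and $\pi_i : \Gsf \to \Gsf_i$ be the projections; note $\kappa(\ga) = \sum_i \kappa_i(\pi_i(\ga))$ and $\Psi_{\Gsf} = \sum_i \Psi_{\Gsf_i} \circ p_i$ where $\Psi_{\Gsf_i}$ is the sum of positive roots of $\Gsf_i$. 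First I would record the elementary monotonicity/subadditivity property of growth indicators under projection: if $\Gamma_i := \overline{\pi_i(\Gamma)}^{\,Zar}$-dense image $\pi_i(\Gamma)$, then for $u = \sum u_i \in \mfa^+$ one has $\psi_{\Gamma}(u) \le \sum_i \psi_{\pi_i(\Gamma)}(u_i)$; this follows directly from the definition of the critical exponent applied to product cones, since $\norm{\kappa(\ga)} \asymp \sum_i \norm{\kappa_i(\pi_i(\ga))}$ and any cone around $u$ contains a product cone.

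Next I would analyze the hypotheses. Since $\Gamma$ is a non-lattice, by a theorem of (Dani, or in this setting) the covolume is infinite, and condition (2) says each $\Gsf_i$ has no nonzero invariant vector in $L^2(\Gamma \ba \Gsf)$. The point of (2) is to rule out the degenerate case that some projection $\pi_i(\Gamma)$ is a lattice in $\Gsf_i$: if $\pi_i(\Gamma)$ were a lattice, then $L^2(\Gsf_i / \pi_i(\Gsf)_{\text{stab}})$... more precisely, one uses that a $\Gsf_i$-invariant vector in $L^2(\Gamma \ba \Gsf)$ would exist if $\Gamma$ were, up to finite index, a product containing a lattice factor of $\Gsf_i$; condition (2) excludes exactly this. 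Hence for each $i$, the Zariski dense subgroup $\pi_i(\Gamma) < \Gsf_i$ is \emph{not} a lattice (and more: the relevant unitary representation has a spectral gap). By property (T) of $\Gsf$ and hence of each $\Gsf_i$, combined with (2), the Quint/Corlette gap theorem applies to each factor: $\psi_{\pi_i(\Gamma)} < \Psi_{\Gsf_i}$ on $\mfa_i^+ \smallsetminus \{0\}$. Actually I would be slightly careful here: what Quint proves is that if $\Gsf_i$ has property (T) and $\Gamma$ has infinite covolume in $\Gsf_i$ then $\psi < \Psi$; since $\pi_i(\Gamma)$ might a priori be discrete-or-not, I would instead invoke the matrix-coefficient formulation (as in \cite{LO_Dichotomy}): condition (2) ensures a uniform spectral gap $s_i < 1$ with $\psi_{\pi_i(\Gamma)} \le s_i \Psi_{\Gsf_i}$.

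Finally I assemble: for any $u = \sum u_i \in \mfa^+ \smallsetminus \{0\}$, pick $j$ with $u_j \ne 0$. Then
$$
\psi_{\Gamma}(u) \le \sum_i \psi_{\pi_i(\Gamma)}(u_i) \le \psi_{\pi_j(\Gamma)}(u_j) + \sum_{i \ne j} \Psi_{\Gsf_i}(u_i) < \Psi_{\Gsf_j}(u_j) + \sum_{i\ne j} \Psi_{\Gsf_i}(u_i) = \Psi_{\Gsf}(u),
$$
using $\psi_{\pi_i(\Gamma)} \le \Psi_{\Gsf_i}$ on all of $\mfa_i^+$ (the non-strict bound, which holds for any discrete subgroup, i.e. Quint's general upper bound $\psi \le 2\rho = \Psi$) for $i \ne j$ and the \emph{strict} gap for the index $j$. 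This gives $\psi_{\Gamma} < \Psi_{\Gsf}$ on $\mfa^+ \smallsetminus \{0\}$, as claimed.

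\textbf{Main obstacle.} The delicate step is the second one: translating hypothesis (2) — a statement about invariant vectors in $L^2(\Gamma \ba \Gsf)$ for the \emph{ambient} product — into the factor-wise spectral-gap input needed to invoke Quint/Corlette for $\pi_i(\Gamma) < \Gsf_i$. One has to handle the possibility that $\pi_i(\Gamma)$ is not discrete in $\Gsf_i$ (it need only be Zariski dense), so the "growth indicator" and "covolume" of $\pi_i(\Gamma)$ may not literally make sense; the cleanest fix is to phrase everything through matrix coefficients / spectral gap of the quasi-regular representation on $L^2(\Gamma \ba \Gsf)$ restricted to $\Gsf_i$, following \cite{LO_Dichotomy}, rather than through covolume, and to show $\psi_\Gamma$ is controlled by the decay rate of these matrix coefficients along $\Asf_i^+$. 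I expect this bookkeeping, together with checking that a lattice factor is the only obstruction to (2) and is excluded, to be where the real work lies; the monotonicity lemma and the final interpolation are routine.
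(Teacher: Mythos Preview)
Your primary route---bounding $\psi_\Gamma$ by the sum of growth indicators of the projections $\pi_i(\Gamma)$---has a genuine gap: the growth indicator $\psi_{\pi_i(\Gamma)}$ is only defined for \emph{discrete} subgroups, and for a Zariski dense $\Gamma$ in a nontrivial product the projections are typically dense (think of any infinite-index Zariski dense subgroup of an irreducible lattice). You acknowledge this and propose to replace the covolume/growth-indicator input for $\pi_i(\Gamma)$ by matrix-coefficient decay for $L^2(\Gamma\backslash\Gsf)|_{\Gsf_i}$. That fix is correct, but it is not bookkeeping: it is the entire argument, and it is exactly what the paper does. The paper never projects $\Gamma$. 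Instead it invokes Lee--Oh's criterion (their Proposition~7.3): if $\Ksf$-invariant matrix coefficients of $\rho = L^2(\Gamma\backslash\Gsf)$ decay like $e^{-(1-\epsilon)\theta(u)}$ for some $0<\theta<\Psi_{\Gsf}$, then $\psi_\Gamma \le \Psi_{\Gsf} - \theta$. To produce $\theta$, the paper decomposes $\rho = \int^\oplus \rho_y\,dy$ into irreducibles, writes each $\rho_y = \otimes_i \rho_{y,i}$ with $\rho_{y,i}$ irreducible on $\Gsf_i$, and observes that hypothesis~(2) forces $\rho_{y,i}$ to have no nonzero invariant vector for a.e.\ $y$. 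The factor-wise decay then comes from Oh's quantitative bounds in higher rank, and from Cowling together with Cowling--Haagerup--Howe in rank one (property~(T) excludes $\mathsf{SO}(n,1)$ and $\mathsf{PU}(n,1)$, so irreducibles are uniformly $L^p$). In short: what is being decomposed is the \emph{representation}, not the group, and the simple-factor gap theorems are applied to irreducible unitary representations of $\Gsf_i$, not to discrete subgroups of $\Gsf_i$.

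A secondary issue: your reading of hypothesis~(2) as ``no $\pi_i(\Gamma)$ is a lattice'' is not the right mechanism. For instance, a surface group embedded in $\SL_2(\Rb)\times\SL_2(\Rb)$ via two inequivalent Fuchsian representations has both projections lattices, yet (2) holds: any $\Gsf_1$-invariant $L^2$ function is pulled back along a fiber bundle with fiber $\Gsf_1$ of infinite volume, hence vanishes. That example lacks property~(T), but it shows that (2) is not a statement about the projections; its correct role is precisely to guarantee that the irreducible pieces $\rho_{y,i}$ have no invariant vectors, which is what feeds into the matrix-coefficient bounds above.
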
 

\begin{remark}
The above proposition was known when $\Gsf$ has rank one  \cite[Theorem 4.4]{Corlette_gap} and when $\Gsf$ is simple with higher rank (\cite{Quint_Kazhdan}, \cite[Theorem 7.1]{LO_Dichotomy}).  For semisimple $\Gsf$, it is not enough to assume that $\Ga$ is not a lattice. Indeed, when $L^2(\Gamma \backslash \Gsf)$ contains a  non-zero $\Gsf_i$-invariant vector for some $i$, it is possible for the conclusion of the proposition to fail. For instance if $\Gsf=\Gsf_1 \times \Gsf_2$ and $\Ga = \Ga_1 \times \Ga_2$ where $\Gamma_1 < \Gsf_1$ is a lattice and $\Ga_2 < \Gsf_2$ is a Zariski dense non-lattice discrete subgroup, then  
$$
\psi_{\Ga} =  \Psi_{\Gsf} \quad \text{on} \quad \fa_1^+ \oplus \{0\}.
$$
(Since $\fa^+$ is the closed maximal Weyl chamber, $\fa_1^+ \oplus \{0\} \subset \fa^+$.)
\end{remark}

\begin{remark} 
   One can see that the condition (2) in Proposition \ref{prop:gap result} holds when the subgroup $\Gamma \Gsf_i$ is dense in $\Gsf$ for each $1 \leq i \leq m$.
\end{remark}

The proof below closely follows the argument in ~\cite[Theorem 7.1]{LO_Dichotomy} and we use the following result from that paper to reduce to a problem about unitary representations. Let $\rho$ be the unitary representation of $\Gsf$ on $L^2( \Gamma \backslash \Gsf)$ given by $\rho(g) f(x) = f(xg)$ for $g \in \Gsf$, $f \in L^2(\Ga \ba \Gsf)$, and $x \in \Ga \ba \Gsf$.

\begin{proposition}\cite[Proposition 7.3]{LO_Dichotomy}\label{prop:LO result} 
   Suppose   
   $\theta \in \mfa^*$ satisfies
   \begin{enumerate} 
      \item $0 < \theta < \Psi_{\Gsf}$ on $\mfa^+ \smallsetminus \{0\}$, and 
      \item there exists a function $C : (0,+\infty) \rightarrow (0,+\infty)$ so that 
$$
\ip{ \rho(\exp u) f, f} \leq C(\epsilon) e^{-(1-\epsilon)\theta(u)}\norm{f}_2^2
$$
for any $\Ksf$-invariant vector $f \in L^2( \Gamma \backslash \Gsf)$, $u \in \fa^+$, and $\epsilon > 0$. 
   \end{enumerate}
Then  
$$
\psi_{\Ga} \leq \Psi_{\Gsf} - \theta \quad \text{on} \quad \fa^+ \smallsetminus \{ 0\}.
$$
 
\end{proposition}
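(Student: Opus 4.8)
The plan is to convert hypothesis~(2) into an upper bound for the orbital counting function of $\Ga$ via the standard dictionary between lattice‑point counting and matrix coefficients of $L^2(\Ga\ba\Gsf)$, and then to read off the growth indicator directly from Quint's definition; this is essentially the argument of \cite{LO_Dichotomy}. First I would fix a small symmetric relatively compact bi‑$\Ksf$‑invariant neighborhood $\Uc$ of $\Ksf$ in $\Gsf$ whose Cartan spectrum lies in the $\norm\cdot$‑ball $B(0,\epsilon_0)\subset\mfa$, and let $F\in L^2(\Ga\ba\Gsf)$ be the indicator function of the image of $\Uc$; since $\Uc$ is bi‑$\Ksf$‑invariant, $F$ is right $\Ksf$‑invariant, and $0<\norm{F}_2^2\le\Vol(\Uc)<\infty$. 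Unfolding $\ip{\rho(g)F,F}=\int_{\Ga\ba\Gsf}F(hg)F(h)\,dh$ yields, up to a multiplicative constant depending only on $(\Ga,\Uc)$,
$$
\ip{\rho(g)F,F}\ =\ \sum_{\ga\in\Ga}\Vol\big(\Uc\cap\ga\,\Uc\,g^{-1}\big),\qquad g\in\Gsf,
$$
where each term is nonnegative, is supported on $\{g:\kappa(g)\in B(\kappa(\ga),2\epsilon_0)\}$ by subadditivity of the Cartan projection, and integrates over $\Gsf$ to $\Vol(\Uc)^2$ by unimodularity of $\Gsf$.

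Next, fix $v\in\fa^+$, $r>0$, and set $S_v:=\{g\in\Gsf:\kappa(g)\in B(v,r+2\epsilon_0)\}$. Keeping only the terms with $\kappa(\ga)\in B(v,r)$, whose supports lie inside $S_v$, and using that all terms are nonnegative,
$$
\int_{S_v}\ip{\rho(g)F,F}\,dg\ \ge\ \Vol(\Uc)^2\cdot\#\{\ga\in\Ga:\kappa(\ga)\in B(v,r)\};
$$
note that no disjointness or bounded‑overlap property is needed here, only nonnegativity. For the reverse bound, since $F$ is $\Ksf$‑invariant and $\kappa(g)\in\fa^+$, hypothesis~(2) gives $\ip{\rho(g)F,F}=\ip{\rho(\exp\kappa(g))F,F}\le C(\epsilon)\,e^{-(1-\epsilon)\theta(\kappa(g))}\norm{F}_2^2$, while in Cartan coordinates $\Gsf=\Ksf(\exp\fa^+)\Ksf$ the Haar density is $\lesssim e^{\Psi_{\Gsf}(H)}$ in the $\fa^+$‑variable $H$ --- the functional $\Psi_{\Gsf}$ of Lemma~\ref{lem:Leb PS}. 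Since $\Psi_{\Gsf}-(1-\epsilon)\theta$ varies by at most a constant (depending only on $r,\epsilon_0$ and the functionals) across $B(v,r+2\epsilon_0)$, one gets $\int_{S_v}\ip{\rho(g)F,F}\,dg\lesssim e^{\Psi_{\Gsf}(v)-(1-\epsilon)\theta(v)}$, and hence
$$
\#\{\ga\in\Ga:\kappa(\ga)\in B(v,r)\}\ \lesssim\ C(\epsilon)\,e^{\Psi_{\Gsf}(v)-(1-\epsilon)\theta(v)}\qquad\text{for every }v\in\fa^+,
$$
with implied constant depending on $r,\epsilon_0,\epsilon$.

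Now fix $u\in\fa^+\smallsetminus\{0\}$ and an open cone $\Cc\ni u$. Shrinking $\Cc$ and invoking continuity of the linear functionals on the unit sphere, I may assume $\Psi_{\Gsf}(v)-(1-\epsilon)\theta(v)\le\big(\tfrac{\Psi_{\Gsf}(u)-(1-\epsilon)\theta(u)}{\norm{u}}+\epsilon'\big)\norm{v}$ for all $v\in\Cc$. Covering $\Cc\cap\{\norm{H}\le T\}$ by $O(T^{\dim\mfa})$ balls of radius $r$ with centers in $\Cc$ and summing the previous count, the series $\sum_{\ga\in\Ga,\ \kappa(\ga)\in\Cc}e^{-s\norm{\kappa(\ga)}}$ converges whenever $s>\tfrac{\Psi_{\Gsf}(u)-(1-\epsilon)\theta(u)}{\norm{u}}+\epsilon'$ (the polynomial factor $T^{\dim\mfa}$ being harmless), so its critical exponent is at most that number. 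Letting $\Cc$ shrink to the ray through $u$ so that $\epsilon'\to0$, taking the infimum over cones in the definition of $\psi_{\Ga}$, multiplying by $\norm{u}$, and finally letting $\epsilon\to0$ (the constant $C(\epsilon)$ is irrelevant to critical exponents), I conclude $\psi_{\Ga}(u)\le\Psi_{\Gsf}(u)-\theta(u)$. Hypothesis~(1) is what makes $\Psi_{\Gsf}-\theta$ a nontrivial majorant (strictly below Quint's bound $\psi_{\Ga}\le\Psi_{\Gsf}$) that stays nonnegative on $\fa^+\smallsetminus\{0\}$, consistent with $\psi_{\Ga}\ge0$ wherever finite.

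The step I expect to require the most care is the measure‑theoretic bookkeeping in the first two paragraphs. Because the conjugated neighborhood $\ga\Uc\ga^{-1}$ degenerates as $\kappa(\ga)\to+\infty$, one cannot simply evaluate $\ip{\rho(g)F,F}$ at $g=\ga$; instead each $\ga$‑term must be integrated over all of $\Gsf$, where it contributes a clean $\Vol(\Uc)^2$, and only then restricted, via support considerations, to the Cartan slab $S_v$. One must also verify the Cartan‑coordinate density bound $dg\lesssim e^{\Psi_{\Gsf}(H)}\,dH\,dk_1\,dk_2$ together with the inequality $\Psi_{\Gsf}(H)-(1-\epsilon)\theta(H)\le\Psi_{\Gsf}(v)-(1-\epsilon)\theta(v)+O(r+\epsilon_0)$ uniformly as $v$ approaches the walls of $\fa^+$. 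The final passage to the growth indicator is then routine cone‑counting.
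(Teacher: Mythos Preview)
The paper does not supply its own proof of this proposition; it is quoted verbatim from \cite[Proposition~7.3]{LO_Dichotomy} and used as a black box in the proof of Proposition~\ref{prop:gap result}. Your outline is correct and is precisely the Lee--Oh argument you cite: unfold the matrix coefficient of a $\Ksf$-invariant bump to count orbit points in Cartan balls, bound the integral over a Cartan slab using hypothesis~(2) together with the density estimate $dg\lesssim e^{\Psi_{\Gsf}(H)}\,dH\,dk_1\,dk_2$, and read off $\psi_\Ga$ from the resulting exponential count via cones.

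One small bookkeeping point: for the unfolding to produce exactly $\sum_{\ga\in\Ga}\Vol(\Uc\cap\ga\,\Uc\,g^{-1})$ you should take $F(\Ga h)=\sum_{\ga\in\Ga}\mathbf{1}_\Uc(\ga h)$ rather than the indicator of the image of $\Uc$ in $\Ga\backslash\Gsf$. Since your $\Uc$ contains $\Ksf$, the $\Ga$-translates of $\Uc$ need not be disjoint (one has $\Ga\cap\Uc\Uc^{-1}\supset\Ga\cap\Ksf$, finite but possibly nontrivial), so the two functions differ by a bounded multiplicity; your hedge ``up to a multiplicative constant depending only on $(\Ga,\Uc)$'' already absorbs this, and nothing in the argument changes.
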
 

\begin{proof}[Proof of Proposition~\ref{prop:gap result}]
    We can decompose
$$
\rho = \int^{\oplus}_Y \rho_y dy
$$
into irreducible representations $\rho_y : \Gsf \rightarrow \Usf(\Hc_y)$, for some (standard) measure space $(Y, dy)$ (see \cite[Section 2.3]{Zimmer_book}).
As in Section 2.6 of~\cite{Oh2002}, we can write $\Hc_y= \otimes_i \mathcal{H}_{y,i}$ and $\rho_y = \otimes_i \rho_{y,i}$ where  each $\rho_{y,i} : \Gsf_i \rightarrow \mathsf{U}(\mathcal{H}_{y,i})$ is irreducible. Further, any $\Ksf$-invariant $f \in \Hc_y$ can be written as $f = \otimes_i f_i$ where each $f_i$ is $\Ksf_i$-invariant.

Now, if $f =( f_y)_y \in L^2( \Gamma \backslash \Gsf)$ is $\Ksf$-invariant, then $f_y$ is $\Ksf$-invariant for a.e. $y$ and for such $y$, $f_y= \otimes_i f_{y,i}$ where each $f_{y,i}$ is $\Ksf_i$-invariant. Further, 
$$
\ip{ ( \otimes_i f_{y,i})_y , ( \otimes_i g_{y,i})_y } = \int_Y \prod_i \ip{f_{y,i},g_{y,i}} dy.  
$$
So using Proposition~\ref{prop:LO result} it suffices to show the following. 

\medskip

\noindent \textbf{Claim:} For each $1 \leq i \leq m$ there exist $\theta_i \in \fa_i^*$ and a function $C_i : (0,+\infty) \rightarrow (0,+\infty)$ such that $0 < \theta_i < \Psi_{\Gsf}$ on $\fa^+_i \smallsetminus \{0\}$ and 
$$
\ip{ \rho_{y,i}(\exp u) f, f} \leq C_i(\epsilon) e^{-(1-\epsilon)\theta_i(u)}\norm{f}_2^2
$$
for a.e. $y \in Y$, any $\Ksf_i$-invariant vector $f \in \mathcal{H}_{y,i}$, $u \in \fa_i^+$, and $\epsilon > 0$. 

\medskip

Fix $1 \leq i \leq m$. By hypothesis and construction, $\rho_{y,i}$ has no non-zero $\Gsf_i$-invariant vectors for a.e. $y \in Y$. Hence when $\rank(\Gsf_i) > 1$, the claim follows immediately from Theorem 1.2 (and the estimates for $\xi_{\mathscr{S}}$ on page 136) in \cite{Oh2002}. 

Suppose $\rank(\Gsf_i) = 1$. Since $\Gsf$ has property (T), $\Gsf_i$ is not isomorphic to $\mathsf{SO}(n,1)$ or $\mathsf{PU}(n,1)$ for any $n \ge 1$.  Then by ~\cite[Theorem 2.5.3]{Cow1979} there exists $p > 0$ (which only depends on $\Gsf_i$) such that for a.e. $y \in Y$, each matrix coefficient  
$$
g \in \Gsf_i \mapsto \ip{\rho_{y,i}(g) f_1,f_2}
$$
is in $L^q(\Gsf_i)$ for all $q \geq p$. For such $y \in Y$, \cite[Corollary pg. 108]{CHH1988} implies that there exists $k \in \Nb$ (which only depends on $\Gsf_i$) such that 
$$
\ip{ \rho_{y,i}(g) f, f} \leq \Xi_i(g)^{1/k}\norm{f}_2^2
$$
for any $\Ksf_i$-invariant vector $f \in \mathcal{H}_{i,y}$, where $\Xi_i$ is the Harish-Chandra function of $\Gsf_i$. Then the claim follows from the definition of $\Xi_i$.
\end{proof}

\section{Proof of Theorem \ref{thm:singularity discrete group}}

We are now ready to prove Theorem~\ref{thm:singularity discrete group}. Let $\Gsf$ be a connected semisimple Lie group without compact factors and with finite center. Suppose $\Ga < \Gsf$ is a Zariski dense discrete subgroup, $\mathsf{m}$ a probability measure whose support generates $\Ga$ as a group, and $\nu$ is the unique $\mathsf{m}$-stationary measure on the Furstenberg boundary $\Fc$. 

Notice that replacing $\Gsf$ by $\Gsf/Z(\Gsf)$, $\Gamma$ by its projection to $\Gsf/Z(\Gsf)$, and $\mathsf{m}$ by its push-forward to $\Gsf/Z(\Gsf)$ does not change $\Fc$ or  $\nu$. So for our purposes there is no loss in generality in assuming that $Z(\Gsf) = \{\id\}$. Then we can write 
$$
\Gsf = \prod_{i=1}^m \Gsf_i
$$
where the $\Gsf_i$'s are the simple factors of $\Gsf$. 

We prove Theorem~\ref{thm:singularity discrete group} by contradiction. To that end, suppose that 
\begin{itemize}
\item $\nu$ is non-singular to the Lebesgue measure $\Leb$ on $\Fc$,
\item $\Gamma$ is not a lattice, and
\item $\Gsf$ has property (T).
\end{itemize} 

To use Proposition~\ref{prop:gap result} we need to reduce to the case where  condition (2) is satisfied. 

\begin{proposition} There exists $J \subset \{1,\dots, m\}$ non-empty such that
\begin{enumerate}
\item the projection $\Gamma'$ of $\Gamma$ to  $\Gsf':= \prod_{i \in J} \Gsf_i$ is a Zariski dense non-lattice discrete subgroup of $\Gsf'$, and 
\item for every $j \in J$, there is no $\Gsf_j$-invariant non-zero vector in $L^2(\Gamma' \backslash \Gsf')$. 
\end{enumerate} 
\end{proposition}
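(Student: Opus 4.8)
The plan is to induct on the number $m$ of simple factors, discarding one factor at each step. The engine is the following claim about a two‑factor splitting $\Gsf' := \Hsf \times \Gsf_m$, in which $\Hsf$ is a (possibly trivial) product of connected simple Lie groups of non‑compact type with trivial centers, $\Gsf_m$ is one more such factor, and $\Gsf_m$ is identified with $\{\id\} \times \Gsf_m < \Gsf'$. \textbf{Claim:} if $\Gamma < \Gsf'$ is discrete and Zariski dense and $L^2(\Gamma \backslash \Gsf')$ contains a nonzero $\Gsf_m$‑invariant vector, then $\Gamma_m := \Gamma \cap \Gsf_m$ is a lattice in $\Gsf_m$, the projection $\Gamma''$ of $\Gamma$ to $\Hsf$ is discrete, and $\Vol(\Gamma \backslash \Gsf') = \Vol(\Gamma'' \backslash \Hsf) \cdot \Vol(\Gamma_m \backslash \Gsf_m)$ for suitably normalized Haar measures; in particular $\Gamma$ is a lattice in $\Gsf'$ if and only if $\Gamma''$ is a lattice in $\Hsf$.

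Granting the Claim, the induction proceeds as follows. If $m = 1$, a $\Gsf_1$‑invariant vector in $L^2(\Gamma \backslash \Gsf_1)$ is almost everywhere constant because the right $\Gsf_1$‑action on $\Gamma \backslash \Gsf_1$ is transitive, hence lies in $L^2$ only if $\Vol(\Gamma \backslash \Gsf_1) < + \infty$; as $\Gamma$ is not a lattice, no such vector exists and $J := \{1\}$ works. If $m \ge 2$ and $L^2(\Gamma \backslash \Gsf)$ has no nonzero $\Gsf_j$‑invariant vector for any $j$, take $J := \{1, \dots, m\}$. Otherwise, after reindexing we may assume $L^2(\Gamma \backslash \Gsf)$ has a nonzero $\Gsf_m$‑invariant vector; set $\Hsf := \prod_{i=1}^{m-1} \Gsf_i$ and let $\Gamma''$ be the projection of $\Gamma$ to $\Hsf$. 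By the Claim $\Gamma''$ is discrete; it is Zariski dense in $\Hsf$ since projections of Zariski dense subgroups are Zariski dense; and it is not a lattice, because $\Gamma$ is not a lattice while $\Gamma_m$ is a lattice in $\Gsf_m$. Applying the inductive hypothesis to $\Gamma'' < \Hsf$ yields a nonempty $J \subseteq \{1, \dots, m-1\}$, and since the projection of $\Gamma''$ to $\prod_{i \in J} \Gsf_i$ equals the projection of $\Gamma$ to $\prod_{i \in J} \Gsf_i$, this $J$ works for $\Gamma$.

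For the Claim itself, decompose $L^2(\Gamma \backslash \Gsf')$ as a direct integral over the orbit space of the right $\Gsf_m$‑action: each such orbit is the image of $\{h\} \times \Gsf_m$ in $\Gamma \backslash \Gsf'$ and is isomorphic, as a measured $\Gsf_m$‑space, to $\Gamma_m \backslash \Gsf_m$ (the $\Gsf_m$‑stabilizer of $\Gamma(h, \id)$ is exactly $\Gamma_m$), and $\Gsf'$‑invariance of the measure makes the relevant normalizing constants uniform in $h$. Hence $L^2(\Gamma \backslash \Gsf')^{\Gsf_m}$ is the direct integral of the spaces $L^2(\Gamma_m \backslash \Gsf_m)^{\Gsf_m}$, and the latter are nonzero (consisting of constants) precisely when $\Vol(\Gamma_m \backslash \Gsf_m) < + \infty$; so the hypothesis forces $\Gamma_m$ to be a lattice in $\Gsf_m$. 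Then $\Gamma_m$ is Zariski dense in $\Gsf_m$ by the Borel density theorem, and since $\Gamma$ normalizes $\Gamma_m$ the projection of $\Gamma$ to $\Gsf_m$ lies in $N_{\Gsf_m}(\Gamma_m)$; this normalizer is discrete, since its identity component centralizes $\Gamma_m$ — for each $\delta \in \Gamma_m$ the continuous map $n \mapsto n \delta n^{-1}$ from the identity component into the discrete set $\Gamma_m$ is constant — and is therefore contained in $Z(\Gsf_m) = \{\id\}$. A discrete subgroup containing the lattice $\Gamma_m$ is itself a lattice, so $N_{\Gsf_m}(\Gamma_m)/\Gamma_m$ is finite, and the standard argument now shows $\Gamma''$ is discrete: given $(a_n, b_n) \in \Gamma$ with $a_n \to \id$ in $\Hsf$, after right‑multiplying by suitable elements of $\Gamma_m$ and passing to a subsequence one may assume $b_n$ is constant, whereupon discreteness of $\Gamma$ forces $a_n = \id$ for large $n$. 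Finally, once $\Gamma''$ is discrete, $\Gamma \backslash \Gsf' \to \Gamma'' \backslash \Hsf$ is a genuine fiber bundle with fiber $\Gamma_m \backslash \Gsf_m$, which gives the volume identity. The main obstacle is this first conclusion — deducing that $\Gamma_m$ is a lattice from the bare existence of a $\Gsf_m$‑invariant $L^2$ vector — which one must carry out before $\Gamma''$ is known to be discrete, hence working with the measurable rather than the smooth quotient by $\Gsf_m$; the remaining ingredients are classical.
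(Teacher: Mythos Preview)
Your Claim is false, and the gap is precisely at the step you flagged as the ``main obstacle.'' Here is a counterexample. Take $\Gsf' = \Gsf_1 \times \Gsf_2 \times \Gsf_3$ with each $\Gsf_i$ simple of higher rank (so property~(T) holds), let $\Gamma_1 < \Gsf_1$ be Zariski dense, discrete, and of infinite covolume, let $\Lambda < \Gsf_2 \times \Gsf_3$ be an \emph{irreducible} lattice (e.g.\ $\SL_3(\Zb[\sqrt{2}])$ via the two real embeddings), and set $\Gamma := \Gamma_1 \times \Lambda$. Then $\Gamma$ is Zariski dense, discrete, and not a lattice. Since $\Gamma \backslash \Gsf' \cong (\Gamma_1 \backslash \Gsf_1) \times (\Lambda \backslash (\Gsf_2 \times \Gsf_3))$ and the $\Gsf_3$-action on the second factor is ergodic (Howe--Moore), one has $L^2(\Gamma \backslash \Gsf')^{\Gsf_3} \cong L^2(\Gamma_1 \backslash \Gsf_1) \neq 0$. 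Yet $\Gamma \cap \Gsf_3 = \{e\} \times (\Lambda \cap \Gsf_3) = \{e\}$, which is certainly not a lattice in $\Gsf_3$; and the projection of $\Gamma$ to $\Hsf = \Gsf_1 \times \Gsf_2$ equals $\Gamma_1 \times \pi_2(\Lambda)$, which is not discrete because $\pi_2(\Lambda)$ is dense in $\Gsf_2$. So both conclusions of your Claim fail.

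The error is in the direct-integral step. When the projection $\Gamma''$ to $\Hsf$ is not discrete, the $\Gsf_m$-orbit space in $\Gamma \backslash \Gsf'$ is not countably separated, and there is no decomposition of $L^2(\Gamma \backslash \Gsf')$ as a direct integral of $L^2(\Gamma_m \backslash \Gsf_m)$ over that orbit space. The correct disintegration is over the \emph{ergodic components} of the $\Gsf_m$-action, and these can be strictly larger than single orbits --- in the example above each ergodic component over a point of $\Gamma_1 \backslash \Gsf_1$ is all of $\Lambda \backslash (\Gsf_2 \times \Gsf_3)$, which has finite volume and hence nonzero invariants, even though each individual $\Gsf_3$-orbit is a copy of $\Gsf_3$ with infinite volume. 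The paper's proof avoids this by taking the closure $\overline{H}$ of the projection of $\Gamma$ to the complementary factors, identifying its identity component as a product $\prod_{i=2}^k \Gsf_i$ of simple factors, and discarding \emph{all} of those factors at once; this is exactly what is needed to handle the ``linked'' factors in an irreducible lattice piece, which your one-factor-at-a-time scheme cannot do.
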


\begin{proof}
    It suffices to consider the case where there is a $\Gsf_1$-invariant non-zero vector $f \in L^2(\Gamma \backslash \Gsf)$. Replacing $f$ by $\min\{f,1\}$ we can assume that $f$ is bounded. Since $\Gamma$ is not a lattice, $f$ is non-constant (as an a.e. defined function). Let $\tilde f : \Gsf \rightarrow \Rb$ be the lift of $f$. Then we can view $\tilde f$ as a bounded non-constant function $\tilde f : \Gsf_2 \times \cdot \times \Gsf_m \rightarrow \Rb$. Further, if $H \subset \Gsf_2 \times \cdot \times \Gsf_m$ is the projection of $\Gamma$, then $\tilde f$ is $H$-invariant. Using Lebesgue differentiation, $\tilde f$ is $\overline{H}$-invariant, where $\overline{H}$ denotes the closure of $H$ in the Hausdorff topology. 

Since $H< \Gsf_2 \times \cdot \times \Gsf_m$ is Zariski dense and normalizes $\overline{H}$, after relabelling we can assume that 
$$
\overline{H} = \prod_{i =2}^k \Gsf_i \times H'
$$
where $H' <  \prod_{i =k+1}^m \Gsf_i$ is discrete. Since $\tilde f$ is non-constant, we must have $k < m$.  

Notice that $H'$ is a Zariski dense discrete subgroup of $\prod_{i =k+1}^m \Gsf_i$. If $H'$ is not a lattice, then the result  follows from induction on the number of simple factors.  So it suffices to assume that $H'$ is a lattice and obtain a contradiction. 

Let $\Gamma_0$ be the kernel of the projection $\Gamma \rightarrow H'$. Then 
$$
\Gamma \backslash \Gsf \rightarrow H' \backslash \prod_{i =k+1}^m \Gsf_i
$$
is a fiber bundle with fibers $\Gamma_0 \backslash \prod_{i =1}^k \Gsf_i$. Further, $f$ descends to a function $f_1 :  H' \backslash \prod_{i =k+1}^m \Gsf_i \rightarrow \Rb$. Hence  
$$
\int_{\Gamma \backslash \Gsf } \abs{f}^2 d\Vol = \Vol\left( \Gamma_0 \backslash \prod_{i =1}^k \Gsf_i\right) \int_{H' \backslash\prod_{i =k+1}^m \Gsf_i } \abs{f_1}^2 d\Vol.
$$
So $\Gamma_0 < \prod_{i =1}^k \Gsf_i$ must be a lattice. Then
$$
+\infty = \Vol(\Ga \ba \Gsf) = \Vol\left( \Gamma_0 \backslash \prod_{i =1}^k \Gsf_i\right) \Vol\left( H' \backslash\prod_{i =k+1}^m \Gsf_i  \right)  < + \infty
$$
 and so we have a contradiction.
\end{proof} 

We now finish the proof of Theorem \ref{thm:singularity discrete group}.
Fix $J \subset \{1,\dots, m\}$, $\Gamma'$, and $\Gsf'$ as in the previous proposition. Let $\mathsf{m}'$ be the push-forward of $\mathsf{m}$ to $\Gsf'$ and let $\nu'$ be the push-forward of $\nu$ to $\Fc':=\Gsf' / \Psf'$ under canonical projections, where $\Psf' : = \prod_{j \in J} \Psf_j$.  Then the support of $\mathsf{m}'$ generates $\Gamma'$ as a group and $\nu'$ is the unique $\mathsf{m}'$-stationary measure. Further, $\nu'$ is non-singular to the Lebesgue measure on $\Fc'$, which is the push-forward of the Lebesgue measure on $\Fc$. 

Hence by replacing  $\Gsf$ by $\Gsf'$ we can assume that for every $1 \leq i \leq m$, there is no $\Gsf_i$-invariant non-zero vector in $L^2(\Gamma \backslash \Gsf)$. By Proposition~\ref{prop:gap result}, 
\begin{equation}\label{eqn:gap with growth fcn} 
\psi_{\Ga} <  \Psi_{\Gsf} \quad \text{on} \quad \fa^+\smallsetminus\{0\}.
\end{equation} 
Now Corollary~\ref{cor:conical has positive measure} and the fact that $\nu$, $\Leb$ are non-singular imply that 
$$
   \Leb(\La^{\rm con}(\Ga)) > 0.  
$$
However this contradicts Proposition~\ref{prop:LO76} and Equation~\eqref{eqn:gap with growth fcn}.
\qed

\section{Proof of Theorem~\ref{thm.general PS}}

Following the argument in \cite[Proposition 7.6]{LO_Dichotomy} (stated as Proposition~\ref{prop:LO76} above), we prove Theorem~\ref{thm.general PS}. As in the previous sections, let $\Gsf$ be a connected semisimple Lie group without compact factors and with finite center.

We start by recalling some facts, established by Quint, about the growth indicator function.

\begin{theorem} \cite[Theorem 8.1]{Quint_PS}\label{thm:ex of PS implies comp to growth indicator}
   Suppose $\Ga < \Gsf$ is a Zariski dense discrete subgroup, $\phi \in \mfa^*$, and $\delta \ge 0$. If a $\delta$-dimensional coarse $\phi$-Patterson--Sullivan measure of $\Ga$ exists, then
   $$
\psi_{\Ga} \le \delta \cdot \phi \quad \text{on} \quad \fa.
$$
\end{theorem}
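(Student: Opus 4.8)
\textbf{Plan for the proof of Theorem~\ref{thm:ex of PS implies comp to growth indicator}.}
The plan is to mimic the classical argument that deduces an upper bound on the critical exponent from the existence of a conformal density, now carried out directionally in the closed Weyl chamber. Fix a $\delta$-dimensional coarse $\phi$-Patterson--Sullivan measure $\mu$ of $\Gamma$. The main tool is the Shadow Lemma (Lemma~\ref{lem.shadow lemma}): choose $R>0$ large enough that there exists $C=C(R)>1$ with
$$
\mu(\Oc_R(o,\ga o)) \le C\, e^{-\delta\phi(\kappa(\ga))} \quad \text{for all } \ga \in \Gamma.
$$
The strategy is: (i) use a bounded-multiplicity/packing property of shadows to turn $\sum_\ga \mu(\Oc_R(o,\ga o))$ into an upper bound for a Poincar\'e-type series, hence to bound a critical exponent by $\delta$; (ii) localize this to an open cone $\Cc \ni u$ so that $\|\kappa(\ga)\|$ and $\delta\phi(\kappa(\ga))$ are comparable up to the cone's aperture, and take the aperture to zero to recover $\psi_\Gamma(u) \le \delta\phi(u)$; (iii) handle the cases $u \notin \mfa^+$ (where $\psi_\Gamma(u)=-\infty$) and $u=0$ (where $\psi_\Gamma(0)=0=\delta\phi(0)$) trivially.

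In more detail, first I would record the geometric input: for fixed $R$ there is $N=N(R)$ such that for any $r>0$ the collection $\{\Oc_R(o,\ga o) : \ga \in \Gamma,\ \|\kappa(\ga)\| \in [r, r+1)\}$ has multiplicity at most $N$ in $\Fc$ (two shadows $\Oc_R(o,\ga o), \Oc_R(o,\ga' o)$ with $\kappa(\ga),\kappa(\ga')$ of comparable norm can overlap only if $\ga^{-1}\ga'$ lies in a fixed compact set, by a standard shadow-separation estimate in the symmetric space $X = \Gsf/\Ksf$). Summing the Shadow Lemma bound over such a shell and using $\mu(\Fc)=1$ gives
$$
\#\{\ga \in \Gamma : \|\kappa(\ga)\| \in [r,r+1),\ \ga \in E_\Cc\}\, \inf_{\ga \in E_\Cc,\ \|\kappa(\ga)\|\in[r,r+1)} e^{-\delta\phi(\kappa(\ga))} \;\le\; N\,C,
$$
where $E_\Cc := \{\ga : \kappa(\ga) \in \Cc\}$. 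Fix a unit vector $u \in \mfa^+\smallsetminus\{0\}$ and an open cone $\Cc \ni u$ of small aperture $\eta$; on $\Cc$ we have $|\delta\phi(\kappa(\ga)) - \delta\phi(u)\|\kappa(\ga)\|| \le \delta\,\|\phi\|\,\eta\,\|\kappa(\ga)\|$. Plugging this in and summing the shells, the series $\sum_{\ga \in E_\Cc} e^{-s\|\kappa(\ga)\|}$ converges whenever $s > \delta\phi(u) + \delta\|\phi\|\eta$; hence the critical exponent of this series is at most $\delta\phi(u) + \delta\|\phi\|\eta$. Taking the infimum over cones $\Cc \ni u$ (equivalently $\eta \to 0$) and multiplying by $\|u\|=1$ yields $\psi_\Gamma(u) \le \delta\phi(u)$, and homogeneity of both sides extends this to all $u \in \mfa^+$. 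Since $\psi_\Gamma \equiv -\infty$ off $\mfa^+$, the inequality holds on all of $\mfa$.

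The main obstacle is the geometric shadow-packing step (the bounded-multiplicity claim with a norm-shell constraint): one must show that if $\Oc_R(o,\ga o)$ and $\Oc_R(o,\ga' o)$ intersect and $\|\kappa(\ga)\|, \|\kappa(\ga')\|$ differ by at most $1$, then $\dist_X(\ga o, \ga' o)$ is bounded in terms of $R$ only, so that local finiteness of $\Gamma(o)$ caps the multiplicity. This is where one uses properties of the Cartan projection and the geometry of shadows in higher rank (and implicitly the Zariski density of $\Gamma$, which guarantees $\Gamma$ is non-elementary so the Shadow Lemma applies); once it is in place, everything else is a routine estimate with geometric series. Since this statement is quoted from \cite[Theorem 8.1]{Quint_PS}, in the write-up I would either cite that source directly or reproduce the short packing argument and then the summation above.
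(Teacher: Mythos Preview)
The paper does not prove this result; it cites \cite[Theorem 8.1]{Quint_PS} and remarks that Quint's argument handles the coarse case verbatim. Your outline is essentially Quint's route, so in that sense it agrees with the paper.

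That said, there is a genuine higher-rank issue in the packing step you flag as the ``main obstacle'': the claim you sketch is not merely delicate to prove, it is false as stated when $\rank\Gsf\ge 2$. The Weyl chamber $gA^+o$ through a fixed $\xi\in\Fc$ is a $(\rank\Gsf)$-dimensional sector, so its intersection with a metric annulus of width $1$ at radius $r$ has diameter of order $r$; for a cocompact $\Gamma$, on the order of $r^{\rank\Gsf-1}$ orbit points $\ga o$ lie in the $R$-neighborhood of that piece, each giving $\xi\in\Oc_R(o,\ga o)$. Thus the implication ``$\|\kappa(\ga)\|,\|\kappa(\ga')\|$ comparable and shadows overlap $\Rightarrow$ $\ga^{-1}\ga'$ in a fixed compact set'' is rank-one reasoning that does not survive to higher rank.

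The standard fix is to replace norm-shells by $\mfa$-balls. If $\kappa(\ga),\kappa(\ga')$ both lie in a unit ball $B_{\mfa}(v,1)\subset\mfa^+$ and the two shadows meet at $\xi$, then the nearest points on $W_\xi=gA^+o$ to $\ga o$ and $\ga' o$ have Cartan projections within a bounded distance of $v$ (by the Lipschitz property of $\kappa$), so $\dist_X(\ga o,\ga'o)$ is bounded independently of $v$ and the multiplicity really is uniform. This yields $\#\{\ga:\kappa(\ga)\in B_{\mfa}(v,1)\}\le C'e^{\delta\phi(v)}$ via the Shadow Lemma, and summing over a lattice of centers $v$ in the cone $\Cc$ (the polynomial count of such $v$ not affecting the critical exponent) gives the cone-localized convergence you need. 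Alternatively, one may observe that your norm-shell version still has multiplicity $O(r^{\rank\Gsf-1})$, and polynomial factors do not shift critical exponents; with either correction the remainder of your outline goes through.
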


\begin{remark} Theorem 8.1 in \cite{Quint_PS} only considers non-coarse Patterson--Sullivan measures, but the same proof works in the coarse case. 
\end{remark}

\begin{lemma} \cite[Lemma III.1.3]{Quint_Divergence} \label{lem:div implies comp to growth indicator}
   Suppose $\Ga < \Gsf$ is a discrete subgroup and $\phi \in \mfa^*$ satisfies  $\sum_{\ga \in \Ga} e^{- \phi(\kappa(\ga))} = + \infty$. Then there exists $u \in \fa^+ \smallsetminus \{0\}$ such that
   $$
   \psi_{\Ga}(u) \ge \phi(u).
   $$

\end{lemma}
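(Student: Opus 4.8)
The plan is to argue by contradiction: assume that $\sum_{\ga\in\Ga}e^{-\phi(\kappa(\ga))}=+\infty$ but that $\psi_{\Ga}(u)<\phi(u)$ for \emph{every} $u\in\fa^+\smallsetminus\{0\}$, and derive that the series actually converges.

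The first step is to upgrade this pointwise strict inequality to a uniform gap. The key observation is that, directly from its definition as an infimum over open cones, the function
$$
u\longmapsto \inf_{\Cc\ni u}\Big\{\text{critical exponent of } s\mapsto \sum_{\ga\in\Ga,\ \kappa(\ga)\in\Cc}e^{-s\norm{\kappa(\ga)}}\Big\}
$$
is upper semicontinuous: if it is $<a$ at $u_0$, then a witnessing open cone $\Cc_0$ is a neighbourhood of $u_0$ on which it stays $<a$. Restricting to the compact slice $S:=\{u\in\fa^+:\norm{u}=1\}$, on which this function equals $\psi_{\Ga}$, we get that $\phi-\psi_{\Ga}$ is lower semicontinuous on $S$, hence attains its minimum there; since it is everywhere positive by assumption, there is $c>0$ with $\psi_{\Ga}(u)\le\phi(u)-c$ for all $u\in S$, and so $\psi_{\Ga}(u)\le\phi(u)-c\norm{u}$ for all $u\in\fa^+\smallsetminus\{0\}$ by homogeneity.

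The second step is a covering argument. For each $u\in S$, using the definition of $\psi_{\Ga}(u)$ as an infimum over cones, pick an open cone $\Cc_u\ni u$ whose associated critical exponent $\tau_u$ (of $s\mapsto\sum_{\ga\in\Ga,\ \kappa(\ga)\in\Cc_u}e^{-s\norm{\kappa(\ga)}}$) satisfies $\tau_u<\psi_{\Ga}(u)+c/2$; then shrink $\Cc_u$ further—which only lowers $\tau_u$—so that in addition $\phi(v)>\phi(u)-c/4$ for every $v\in\Cc_u\cap S$. For $\ga\in\Ga$ with $0\neq\kappa(\ga)\in\Cc_u$, setting $\hat v:=\kappa(\ga)/\norm{\kappa(\ga)}\in\Cc_u\cap S$ and using $\phi(u)\ge\psi_{\Ga}(u)+c$, one gets
$$
\phi(\kappa(\ga))=\norm{\kappa(\ga)}\,\phi(\hat v)>\norm{\kappa(\ga)}\big(\psi_{\Ga}(u)+\tfrac{3c}{4}\big)>\norm{\kappa(\ga)}\big(\tau_u+\tfrac{c}{4}\big),
$$
hence $\sum_{\ga\in\Ga,\ \kappa(\ga)\in\Cc_u}e^{-\phi(\kappa(\ga))}\le\sum_{\ga\in\Ga,\ \kappa(\ga)\in\Cc_u}e^{-(\tau_u+c/4)\norm{\kappa(\ga)}}<+\infty$, since $\tau_u+c/4$ exceeds the critical exponent $\tau_u$. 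Now extract from $\{\Cc_u\cap S\}_{u\in S}$ a finite subcover $\Cc_{u_1},\dots,\Cc_{u_N}$ of the compact set $S$; as these are cones, they cover $\fa^+\smallsetminus\{0\}$, and since $\Ga$ is discrete the set $\{\ga\in\Ga:\kappa(\ga)=0\}=\Ga\cap\Ksf$ is finite. Therefore
$$
\sum_{\ga\in\Ga}e^{-\phi(\kappa(\ga))}\le \#(\Ga\cap\Ksf)+\sum_{j=1}^{N}\sum_{\ga\in\Ga,\ \kappa(\ga)\in\Cc_{u_j}}e^{-\phi(\kappa(\ga))}<+\infty,
$$
contradicting the hypothesis. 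Hence some $u\in\fa^+\smallsetminus\{0\}$ satisfies $\psi_{\Ga}(u)\ge\phi(u)$.

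The semicontinuity remark and the compactness extraction are routine; the step needing care is the choice of the cones $\Cc_u$, where one must simultaneously keep the cone-wise critical exponent close to $\psi_{\Ga}(u)$ and make $\phi$ vary by less than the uniform gap across the cone, so that the strict inequality $\phi(\kappa(\ga))>(\tau_u+c/4)\norm{\kappa(\ga)}$—which is exactly what converts convergence of the $\norm{\kappa(\cdot)}$-series into convergence of the $\phi$-series—holds with room to spare. Balancing the constants $c/2$ and $c/4$ against the gap $c$ is the only delicate bookkeeping.
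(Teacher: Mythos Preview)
Your argument is correct and follows exactly the paper's contradiction/cone-covering/compactness route. The paper is slightly more economical: instead of first extracting a uniform gap $c$ via semicontinuity, it just picks for each unit $u$ a pointwise $\epsilon_u>0$ with $\psi_\Gamma(u)<\phi(u)-\epsilon_u$ and then a cone $\Cc_u$ on which $|\phi(v/\|v\|)-\phi(u)|<\epsilon_u$ and whose critical exponent is below $\phi(u)-\epsilon_u$; this also avoids the small wrinkle in your Step~2 at points where $\psi_\Gamma(u)=-\infty$, where the requirement ``$\tau_u<\psi_\Gamma(u)+c/2$'' is ill-posed as written (easily patched by choosing a cone with critical exponent below $-\sup_{S}|\phi|$).
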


As our terminology is slightly different, we recall the proof. 

\begin{proof}
   Suppose to the contrary that $\psi_{\Ga} < \phi$ on $\fa^+  \smallsetminus \{0\}$. 
   
   For each unit vector $u \in \fa^+$, choose $\epsilon_u > 0$ such that $\psi_{\Ga}(u) < \phi(u) - \epsilon_u$. Then there exists an open cone $\Cc_u \subset \fa$ containing $u$ such that
   $$
\left| \phi\left( \frac{v}{\|v\|} \right) - \phi(u) \right| < \epsilon_u \text{ for all }v \in \Cc_u \quad \text{and} \quad 
\sum_{\ga \in \Ga, \kappa(\ga) \in \Cc_u} e^{-(\phi(u) - \epsilon_u) \| \kappa(\ga) \|} < + \infty.
   $$
Hence we obtain
   $$
   \sum_{\ga \in \Ga, \kappa(\ga) \in \Cc_u} e^{- \phi(\kappa(\ga))} < + \infty.
   $$
   Since the unit sphere in $\fa^+$ is compact, there exist unit vectors $u_1, \dots, u_n \in \fa^+$ such that $\fa^+ \smallsetminus \{0\} \subset \bigcup_{i = 1}^n \Cc_{u_i}$. We then have
   $$
   \sum_{\ga \in \Ga} e^{- \phi(\kappa(\ga))} < + \infty,
   $$
   which is a contradiction.
\end{proof}

Now we finish the proof of Theorem \ref{thm.general PS}.

\begin{proof}[Proof of Theorem \ref{thm.general PS}] Corollary~\ref{cor:conical has positive measure} and the fact that $\nu$, $\mu$ are non-singular imply that 
$$
   \mu(\La^{\rm con}(\Ga)) > 0.  
$$
By the definition of the conical limit set and the Shadow Lemma (Lemma~\ref{lem.shadow lemma}), we have 
   $$
   \sum_{\ga \in \Ga} e^{-\delta \phi(\kappa(\ga))} = + \infty.
   $$
   Theorem~\ref{thm:ex of PS implies comp to growth indicator} and Lemma~\ref{lem:div implies comp to growth indicator}  imply the remaining assertion. 
\end{proof}

\section{Rank one discrete subgroups with infinite Bowen--Margulis--Sullivan measures}

This section is devoted to the proof of Theorem \ref{thm:rank one case}.
Suppose $X$ is a negatively curved symmetric space with a fixed basepoint $o \in X$ and $\Gsf = \Isom(X)$.

\subsection{Hopf parametrization and Bowem--Margulis--Sullivan measures} \label{subsec:BMS}
Setting $\partial_{\infty}^{2} X := \{ (x, y ) \in \partial_{\infty}X \times \partial_{\infty} X : x \neq y \}$, the Hopf parametrization of the unit tangent bundle $\mathrm{T}^1(X)$ is a map given by
$$
\begin{tikzcd}[row sep=small]
\mathrm{T}^1(X) \arrow[r] & \qquad\quad \partial_{\infty}^{2} X \times \Rb \qquad\quad \\
v \arrow[r, mapsto] &  (v^+, v^-, -B(g^{-1}, v^+))
\end{tikzcd}
$$
where $v^+, v^- \in \partial_{\infty}X$ are the forward and backward endpoints of $v \in \mathrm{T}^1(X) $ under the geodesic flow, and $g \in \Gsf$ is chosen so that $go \in X$ is the basepoint of $v$. One can show that the above map is well-defined, and is indeed a homeomorphism. Moreover, the geodesic flow on $\mathrm{T}^1(X)$ corresponds to the translation on $\Rb$ in the Hopf pararmetrization.

Let $\Ga < \Gsf$ be a discrete subgroup and $\mu$ a $\delta$-dimensional Patterson--Sullivan measure of $\Ga$ on $\partial_{\infty} X$, for $\delta \ge 0$.
Using the Hopf parametrization, we define the Bowen--Margulis--Sullivan measure of $\Ga$ on $\mathrm{T}^1(\Ga \ba X) = \Ga \ba \mathrm{T}^1(X)$, associated to $\mu$.

Consider the Radon measure $\tilde{m}_{\mu}$ on $ \mathrm{T}^1(X) = \partial_{\infty}^2X \times \Rb$ defined by
$$
d \tilde{m}_{\mu}(x, y, t) = e^{2 \delta \langle x, y \rangle_o} d \nu(x) d \nu(y) dt
$$
where $\langle x, y \rangle_o = \lim_{p \to x, q \to y} \frac{1}{2} \left(\dist_X(o, p) + \dist_X(o, q) - \dist_X(p, q) \right)$ is the Gromov product and $dt$ is the Lebesgue measure on $\Rb$. Then the measure $\tilde{m}_{\mu}$ is invariant under the $\Ga$-action and the geodesic flow. Hence, this induces the measure
$$
m_{\mu}^{\BMS} \quad \text{on} \quad \mathrm{T}^1(\Ga \ba X)
$$
which is invariant under the geodesic flow. We call $m_{\mu}^{\BMS}$ the (generalized) \emph{Bowen--Margulis--Sullivan measure} of $\Ga$ associated to $\mu$.

The follwing is a part of the classical Hopf--Tsuji--Sullivan dichotomy for the geodesic flow.

\begin{theorem}[{\cite[Corollary 20, Theorem 21]{Sullivan_density}}, \cite{CI_limitsets}, \cite{roblin}] \label{thm:HTS}
   Let $\Ga < \Gsf$ be a non-elementary discrete subgroup and $\mu$ a $\delta$-dimensional Patterson--Sullivan measure of $\Ga$ on $\partial_{\infty} X$. Then the following are equivalent.
   \begin{itemize}
      \item $\sum_{\ga \in \Ga} e^{-\delta \dist_X(o, \ga o)} = + \infty$.
      \item $\mu(\La^{\rm con}(\Ga)) > 0$.
      \item  $\mu(\La^{\rm con}(\Ga)) = 1$.
      \item The geodesic flow on $(\mathrm{T}^1(\Ga \ba X), m_{\mu}^{\BMS})$ is completely conservative and ergodic.
   \end{itemize}

\end{theorem}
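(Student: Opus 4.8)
The plan is to show the four conditions equivalent by establishing the cyclic chain of implications $(\mathrm{iv})\Rightarrow(\mathrm{ii})\Rightarrow(\mathrm{i})\Rightarrow(\mathrm{iii})\Rightarrow(\mathrm{iv})$, where I abbreviate $(\mathrm{i})$ for divergence of $\sum_{\ga\in\Ga}e^{-\delta\dist_X(o,\ga o)}$, $(\mathrm{ii})$ for $\mu(\La^{\rm con}(\Ga))>0$, $(\mathrm{iii})$ for $\mu(\La^{\rm con}(\Ga))=1$, and $(\mathrm{iv})$ for conservativity together with ergodicity of the geodesic flow on $(\mathrm{T}^1(\Ga\ba X),m_\mu^{\BMS})$. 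The tools are the Shadow Lemma -- which in this negatively curved rank one setting holds for every non-elementary discrete subgroup and for the genuine ($C=1$) Patterson--Sullivan measure $\mu$, yielding $\mu(\Oc_R(o,\ga o))\asymp e^{-\delta\dist_X(o,\ga o)}$ with implied constant depending only on $R$ once $R$ is large -- together with the geometric fact that $\xi\in\La^{\rm con}(\Ga)$ exactly when a geodesic ray toward $\xi$ projects to a ray in $\Ga\ba X$ returning to some fixed compact set along times $t_n\to+\infty$, equivalently $\xi\in\Oc_R(o,\ga_n o)$ for some $R$ and some $\ga_n$ with $\dist_X(o,\ga_n o)\to+\infty$; we also use that $\mu$ is non-atomic with $\supp\mu=\La(\Ga)$.

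Two of the arrows are essentially formal. For $(\mathrm{ii})\Rightarrow(\mathrm{i})$, argue by contraposition: if $\sum_\ga e^{-\delta\dist_X(o,\ga o)}<+\infty$, then for each fixed large $R$ the Shadow Lemma makes $\sum_\ga\mu(\Oc_R(o,\ga o))$ finite, so the convergence half of Borel--Cantelli puts $\mu$-a.e.\ point in only finitely many $\Oc_R(o,\ga o)$; letting $R\to+\infty$ through $\Nb$ gives $\mu(\La^{\rm con}(\Ga))=0$. For $(\mathrm{iv})\Rightarrow(\mathrm{ii})$: conservativity forces, by the infinite-measure Poincar\'e recurrence theorem, $m_\mu^{\BMS}$-a.e.\ vector to return to every neighborhood of itself at arbitrarily large times, which transported through the Hopf parametrization says $v^+\in\La^{\rm con}(\Ga)$ for $m_\mu^{\BMS}$-a.e.\ $v$; since the $v^+$-marginal of $d\tilde m_\mu=e^{2\delta\ip{x,y}_o}\,d\mu(x)\,d\mu(y)\,dt$ has the same null sets as $\mu$, we get $\mu(\La^{\rm con}(\Ga))=1>0$.

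The first step with content is $(\mathrm{i})\Rightarrow(\mathrm{iii})$. To run the divergence half of Borel--Cantelli on the family $\{\Oc_R(o,\ga o)\}_{\ga\in\Ga}$ one needs the classical quasi-independence estimate: if $\dist_X(o,\ga o)\le\dist_X(o,\ga' o)$ and $\Oc_R(o,\ga o)\cap\Oc_R(o,\ga' o)$ is $\mu$-non-negligible, then $[o,\ga' o]$ passes within bounded distance of $\ga o$, and the Shadow Lemma with basepoint $\ga o$ combined with the transformation rule \eqref{eqn:PS hyperbolic} for $\mu$ under $\Ga$ gives $\mu(\Oc_R(o,\ga o)\cap\Oc_R(o,\ga' o))\lesssim\mu(\Oc_R(o,\ga o))\,\mu(\Oc_R(o,\ga' o))$ with a uniform constant; the Kochen--Stone lemma then yields $\mu(\La^{\rm con}(\Ga))\ge c>0$. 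One upgrades this to full measure by a zero--one argument: $\La^{\rm con}(\Ga)$ is $\Ga$-invariant, so at a conical $\mu$-density point $\xi$ of $\La^{\rm con}(\Ga)$, the maps $\ga_n^{-1}$ along a conical sequence for $\xi$ carry tiny shadows around $\xi$ onto subsets of $\partial_\infty X$ of uniformly bounded-below $\mu$-measure with boundedly distorted $\mu$ (again by \eqref{eqn:PS hyperbolic} and the Shadow Lemma), forcing $\mu(\La^{\rm con}(\Ga))=1$.

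The last arrow $(\mathrm{iii})\Rightarrow(\mathrm{iv})$ is the heart of the theorem. Conservativity: when $\mu(\La^{\rm con}(\Ga))=1$ we have $x,y\in\La^{\rm con}(\Ga)$ for $\mu\times\mu$-a.e.\ $(x,y)$, so $\tilde m_\mu$-a.e.\ $v$ has $v^+$ conical, hence its forward orbit in $\Ga\ba X$ re-enters a fixed relatively compact ball along times $t_n\to+\infty$, spending at least a fixed amount of time there on each visit; fixing a positive $F\in L^1(m_\mu^{\BMS})$ this makes $\int_0^\infty F(g^tv)\,dt=+\infty$ $m_\mu^{\BMS}$-a.e., so the flow is (completely) conservative by Hopf's criterion. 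Ergodicity: with conservativity secured, run the Hopf argument -- for flow-invariant $f\in L^1(m_\mu^{\BMS})$ the Hopf ratio ergodic theorem identifies $f(v)$ with $\lim_{T\to\infty}\big(\int_0^T(f\tilde G)(g^tv)\,dt\big)\big/\big(\int_0^T\tilde G(g^tv)\,dt\big)$ for a fixed compactly supported positive $\tilde G$; since $d(g^tv,g^tw)\to0$ along a strong stable leaf, up to a bounded time shift in the integrals this limit is constant along strong stable leaves, and by time-reversal along strong unstable leaves, and in the Hopf parametrization these are the sets $\{v^+=\mathrm{const}\}$ and $\{v^-=\mathrm{const}\}$ while flow-invariance kills the $\Rb$-coordinate; so $f$ is a.e.\ independent of both $v^+$ and $v^-$, and by the product structure of $\tilde m_\mu$ one interpolates $(x_1,y_1)\rightsquigarrow(x_1,y_2)\rightsquigarrow(x_2,y_2)$ through $\mu$-conull fibers to conclude $f$ is a.e.\ constant, i.e.\ the flow is ergodic. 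I expect this last arrow to be the main obstacle: the Hopf argument requires the ratio ergodic theorem on an a priori infinite-measure space, the local product structure of $m_\mu^{\BMS}$ adapted to the stable/unstable/flow foliations, and control near the diagonal of $\partial_\infty^2 X$; together with the quasi-independence estimate in $(\mathrm{i})\Rightarrow(\mathrm{iii})$ this is where the real work lies.
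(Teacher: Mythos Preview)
The paper does not supply its own proof of this theorem; it is quoted as a known result with references to Sullivan, Coornaert--Iozzi, and Roblin. Your outline is the standard route taken in those references (Shadow Lemma plus Borel--Cantelli/Kochen--Stone for the measure of the conical limit set, Hopf's criterion for conservativity, and the Hopf argument for ergodicity), and is correct as a sketch.
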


\subsection{Proof of Theorem \ref{thm:rank one case}}
    We fix $\Ga$, $\mathsf{m}$, and $\mu$ as in the statement:
    \begin{itemize}
      \item $\Ga < \Gsf$ is a non-elementary discrete subgroup.
      \item $\mathsf{m}$ is a probability measure on $\Ga$ such that
      $$\langle \supp \mathsf{m} \rangle = \Ga \quad \text{and} \quad \sum_{\ga \in \Ga} \dist_X(o, \ga o) \mathsf{m}(\ga) < + \infty.$$
      \item $\mu$ is a Patterson--Sullivan measure of $\Ga$ on $\partial_{\infty} X$.
    \end{itemize} Suppose to the contrary that the $\mathsf{m}$-stationary measure $\nu$ and $\mu$  are non-singular. 

    Note that in this rank one setting, the proof of Corollary \ref{cor:conical has positive measure} works for a non-elementary discrete subgroup $\Ga < \Gsf$, which may not be Zariski dense. Hence, by Corollary \ref{cor:conical has positive measure}, we have $\mu(\La^{\rm con}(\Ga)) > 0$. By Theorem \ref{thm:HTS}, the geodesic flow on $\mathrm{T}^{1}(\Ga \ba X)$ is completely conservative and ergodic with respect to $m_{\mu}^{\BMS}$. 

For $x \in \partial_{\infty} X$, let $\sigma_x : \Rb \rightarrow X$ be the unit speed geodesic line such that $\sigma_x(0) = o$ and $\lim_{t \to + \infty} \sigma_x(t) = x$.   Since $m_{\mu}^{\BMS}(\mathrm{T}^{1}(\Ga \ba X)) = + \infty$, it follows from the Hopf ratio ergodic theorem that for any $R > 0$ we have 
$$
\lim_{t \rightarrow \infty} \frac{1}{T} \int_0^T \mathbf{1}_{\Gamma \cdot B_R(o)}( \sigma_x(t)) dt = 0
$$ 
for $\mu$-a.e. $x \in \partial_\infty X$, where $B_R(o) \subset X$ is the closed ball of radius $R > 0$ with center $o \in X$. Hence the following proposition yields a contradiction and Theorem~\ref{thm:rank one case} follows.

\begin{proposition} If $R > 0$ is sufficiently large, then 
$$
\liminf_{T \rightarrow +\infty} \frac{1}{T} \int_0^T \mathbf{1}_{\Gamma \cdot B_R(o)}( \sigma_x(t)) dt > 0
$$ 
for $\nu$-a.e. $x \in \partial_\infty X$.
\end{proposition}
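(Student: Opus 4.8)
The plan is to combine B\'enard's conical‑recurrence theorem (Theorem~\ref{thm:conical RW}) with the linear escape of the random walk in $X$, turning ``a positive density of \emph{indices} $n$ at which $\sigma_{x_\omega}$ passes near $\omega_n(o)$'' into ``a positive density of \emph{times} $t$ at which $\sigma_{x_\omega}(t)$ lies in $\Ga\cdot B_R(o)$''. Let $\beta$ denote the escape probability, i.e.\ the probability that $\omega_n\neq\id$ for all $n\geq 1$. Since $\dist_X(o,\omega_n o)\to+\infty$ almost surely (Lemma~\ref{lem:convergence}) and $\{\ga\in\Ga:\dist_X(o,\ga o)\leq C\}$ is finite for each $C$ (because $\Ga$ is discrete and $\mathrm{Stab}_{\Gsf}(o)$ is compact), the walk on $\Ga$ is transient, so $\beta>0$. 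Apply Theorem~\ref{thm:conical RW} with $\epsilon:=\beta/2$ to get $R_0>0$, and put $R:=R_0+2$. Then for almost every path $\omega$ the set $S_\omega:=\{n\geq 1: x_\omega\in\Oc_{R_0}(o,\omega_n o)\}$ has lower density $>1-\beta/2$ in $\Nb$; unwinding the definition of the shadow in rank one, $n\in S_\omega$ means precisely that the ray $\sigma_{x_\omega}([0,\infty))$ meets $B_X(\omega_n o, R_0)$, and note $\omega_n(o)\in\Ga(o)$.

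For $p\in\Ga(o)$ set $J_p:=\{t\geq 0:\dist_X(\sigma_{x_\omega}(t),p)\leq R_0+1\}$. As $X$ is nonpositively curved, $t\mapsto\dist_X(\sigma_{x_\omega}(t),p)$ is convex, so $J_p$ is an interval; the reverse triangle inequality $\dist_X(\sigma_{x_\omega}(t),p)\geq|t-s|-\dist_X(\sigma_{x_\omega}(s),p)$ shows that $J_p$ has length at most $4(R_0+1)$, and that if $\sigma_{x_\omega}$ meets $B_X(p,R_0)$ then $J_p$ has length at least $1$ and $J_p\subseteq[0,\dist_X(o,p)+C_0]$ for a constant $C_0=C_0(R_0)$. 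Moreover $\{J_p\}_{p\in\Ga(o)}$ has uniformly bounded overlap: if $t\in J_p\cap J_{p'}$ then $p,p'$ lie in $B_X(\sigma_{x_\omega}(t),R_0+1)$, and since $\Ga(o)$ is a discrete subset of the homogeneous space $X$, every ball of radius $R_0+1$ meets $\Ga(o)$ in at most some fixed number $N_1<\infty$ of points. Writing $P_T:=\{\,\omega_n(o): n\in S_\omega,\ \dist_X(o,\omega_n o)\leq T\,\}\subseteq\Ga(o)$ (a set of \emph{distinct} orbit points), we obtain, for every large $T$,
\[
\int_0^{T+C_0}\mathbf 1_{\Ga\cdot B_R(o)}\big(\sigma_{x_\omega}(t)\big)\,dt\ \geq\ \Leb\!\Big(\bigcup_{p\in P_T}J_p\Big)\ \geq\ \frac1{N_1}\sum_{p\in P_T}\Leb(J_p)\ \geq\ \frac{|P_T|}{N_1},
\]
using that $\sigma_{x_\omega}(t)\in B_X(p,R_0+1)\subseteq\Ga\cdot B_R(o)$ for $t\in J_p$ and that $\Leb(J_p)\geq 1$.

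It remains to prove $|P_T|\gtrsim T$. By the strong law of large numbers, $\tfrac1n\dist_X(o,\omega_n o)\leq\tfrac1n\sum_{i\leq n}\dist_X(o,\ga_i o)\to L:=\int_\Ga\dist_X(o,\ga o)\,d\mathsf m(\ga)<+\infty$ almost surely, so for $T$ large $\{n\geq n_0:\dist_X(o,\omega_n o)\leq T\}\supseteq[n_0,N_T]$ with $N_T:=\lfloor T/(L+1)\rfloor\gtrsim T$. Hence $|P_T|\geq\tfrac1q\,\#\{\text{distinct }\omega_n: n\in S_\omega\cap[n_0,N_T]\}$, where $q:=\#\mathrm{Stab}_\Ga(o)<+\infty$. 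By the Kesten--Spitzer--Whitman theorem the range $R(N):=\#\{\omega_0,\dots,\omega_N\}$ satisfies $R(N)/N\to\beta$ almost surely; on the other hand the number of distinct $\omega_n$ that never occur at an index in $S_\omega\cap[n_0,N_T]$ is at most $\#([1,N_T]\setminus S_\omega)+n_0<(\beta/2)N_T+n_0$ (by the choice $\epsilon=\beta/2$). Therefore, for $T$ large,
\[
\#\{\text{distinct }\omega_n: n\in S_\omega\cap[n_0,N_T]\}\ \geq\ R(N_T)-1-(\beta/2)N_T-n_0\ \geq\ (\beta/3)N_T\ \gtrsim\ T,
\]
so $|P_T|\gtrsim T$. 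Combining with the previous display yields $\liminf_{T\to+\infty}\tfrac1T\int_0^T\mathbf 1_{\Ga\cdot B_R(o)}(\sigma_{x_\omega}(t))\,dt\geq c>0$ for a deterministic constant $c$ and almost every path $\omega$; since this quantity depends only on $x_\omega$ and $\nu$ is the hitting measure (Equation~\eqref{eqn:stat meas as hitting meas}), the proposition follows.

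The main obstacle is precisely this last conversion from densities in $n$ to densities in $t$: B\'enard's theorem only controls good \emph{indices}, and the map $n\mapsto\omega_n(o)$ is typically many-to-one (the walk makes loops), so one must pass to distinct orbit points and guarantee that there are $\gtrsim T$ of them among the good indices --- which is exactly why one needs both the linear‑range estimate (Kesten--Spitzer--Whitman) and the freedom to take $\epsilon$ smaller than the escape probability in Theorem~\ref{thm:conical RW}. Some care is also needed to ensure that the overlap bound $N_1$ and the orbit‑counting in balls are uniform in the center, which uses discreteness of $\Ga$ together with homogeneity of $X$.
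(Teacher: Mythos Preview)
Your proof is correct, and it takes a genuinely different route from the paper's. Both arguments start from B\'enard's theorem to obtain a set $S_\omega$ of indices $n$ with $\sigma_{x_\omega}$ passing $R_0$-close to $\omega_n o$, and both use the finite first moment to pass from densities in $n$ to densities in $t$; the difference is in how one prevents the good indices from collapsing onto too few times.

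The paper separates \emph{times} directly: using ergodicity of the shift it produces, inside $S_\omega$, a $k$-separated subsequence $\{n_j\}$ along which the tail walk $(g_{n_j+1},g_{n_j+2},\dots)$ moves at least $4R_0$ in its first $k$ steps, so that the corresponding closest-point times $t_j$ on the ray satisfy $|t_j-t_{j'}|>2R_0$; then the positive drift $\ell(\mathsf m)>0$ (Maher--Tiozzo) converts density in $n_j$ into density in $t$. You instead separate \emph{orbit points}: you pass from good indices to \emph{distinct} $\omega_n(o)$ via the Kesten--Spitzer--Whitman range theorem (using transience of the walk, hence $\beta>0$), and then use discreteness of $\Gamma(o)$ to bound the overlap multiplicity of the time-intervals $J_p$. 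Your conversion to time uses only the upper bound $\dist_X(o,\omega_n o)\le (L+1)n$ from the SLLN, so you never invoke positivity of the drift. Thus the paper trades the range theorem for Birkhoff on the shift plus $\ell(\mathsf m)>0$, while you trade $\ell(\mathsf m)>0$ for transience and a packing argument; each approach is self-contained and roughly the same length.
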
 

\begin{proof} 
By \cite[Remark following Theorem 7.7]{Kaimanovich2000}, for $\mathsf{m}^{\Nb}$-a.e. $\mathbf{g} = (g_1,g_2, \dots) \in \Gsf^{\Nb}$ the limit 
$$
\zeta(\mathbf{g}) := \lim_{n \rightarrow \infty} g_1 \cdots g_n o
$$
exists in $\partial_\infty X$ and 
$$
\nu = \zeta_* \mathsf{m}^{\Nb}. 
$$
Note that in Section~\ref{subsec:RW}, $\zeta(\mathbf{g})$ was denoted by $x_{\omega}$ where $\omega = (\omega_n) \in \Gsf^{\Nb}$, $\omega_n = g_1 \cdots g_n$. In this proof, we consider the sequence $\mathbf{g} \in \Gsf^{\Nb}$ for steps instead.

By Thoerem~\ref{thm:conical RW}, we can fix $R_0 > 0$ such that for $\mathsf{m}^{\Nb}$-a.e. $\mathbf{g} = (g_n) \in \Gsf^{\Nb}$, 
   $$
\liminf_{N \to \infty} \frac{1}{N} \# \{ 1 \le n \le N : \dist_X(g_1\cdots g_n o , \sigma_{\zeta(\mathbf{g})}) \leq R_0 \} > 1/2.
   $$
For each $k \in \Nb$, let
$$
A_k : = \{ \mathbf{g}  \in \Gsf^{\Nb} : \dist_X( o, g_1\cdots g_n o) > 4R_0 \text{ for all } n \geq k\}. 
$$
Since 
$$
\lim_{n \rightarrow \infty} \dist_X ( o, g_1\cdots g_no) = +\infty
$$
for $\mathsf{m}^{\Nb}$-a.e.  $\mathbf{g} = (g_n) \in \Gsf^{\Nb}$, we have
$$
\lim_{k \rightarrow \infty} \mathsf{m}^{\Nb}( A_k) = 1.
$$
Hence we can fix $k \in \Nb$ such that $\mathsf{m}^{\Nb}( A_k) > 1/2$. Recall that the shift map $S : (\Gsf^{\Nb}, \mathsf{m}^{\Nb}) \rightarrow (\Gsf^{\Nb}, \mathsf{m}^{\Nb})$ given by 
$$
S(g_1,g_2,g_3, \dots) = (g_2,g_3,\dots)
$$
is ergodic. Hence for $\mathsf{m}^{\Nb}$-a.e. $\mathbf{g} \in \Gsf^{\Nb}$, 
$$
\lim_{N \rightarrow \infty} \frac{1}{N} \# \left\{1 \le  n \le N: S^{n} \mathbf{g} \in A_k \right\}= \lim_{N \rightarrow \infty} \frac{1}{N}\sum_{n=1}^{N} \mathbf{1}_{A_k}( S^n \mathbf{g}) =  \mathsf{m}^{\Nb}( A_k) > 1/2.
$$
Thus there exists $c > 0$ such that for $\mathsf{m}^{\Nb}$-a.e. $\mathbf{g} \in \Gsf^{\Nb}$, we have  
\begin{equation}\label{eqn:defn of c}
c \le \liminf_{N \to \infty} \frac{1}{N} \# \left\{1 \le  n \le N: S^{n} \mathbf{g} \in A_k \text{ and } \dist_X(g_1\cdots g_n o , \sigma_{\zeta(\mathbf{g})}) \leq R_0 \right\}.
\end{equation} 

Since $\mathsf{m}$ has a finite first moment, by \cite[Theorem 1.2]{MaherTiozzo2018}, there exists $\ell(\mathsf{m}) > 0$ such that for $\mathsf{m}^{\Nb}$-a.e. $\mathbf{g} \in \Gsf^{\Nb}$, 
\begin{equation} \label{eqn:linear growth}
\lim_{n \rightarrow \infty} \frac{1}{n} \dist_X(o, g_1 \cdots g_n o) = \ell(\mathsf{m}).
\end{equation}

Since $\nu = \zeta_* \mathsf{m}^{\Nb}$, for $\nu$-a.e. $x \in \partial_\infty X$ there exists $\mathbf{g} \in \Gsf^{\Nb}$ such that $\zeta(\mathbf{g}) =x$ and  Equations~\eqref{eqn:defn of c} and \eqref{eqn:linear growth} hold. Since $\supp \mathsf{m}$ generates $\Ga$, we may assume that $\mathbf{g} \in \Ga^{\Nb}$.
So it suffices to fix  such $x \in \partial_{\infty} X$ and $\mathbf{g} \in \zeta^{-1}(x)$, and then show that 
$$
\liminf_{T \rightarrow + \infty} \frac{1}{T} \int_0^T \mathbf{1}_{\Gamma \cdot B_{2R_0}(o)}( \sigma_x(t)) dt > 0.
$$ 

 Let 
$$
I_0 := \left\{ n \in \Nb  : S^{n} \mathbf{g} \in A_k \text{ and } \dist_X(g_1\cdots g_n o , \sigma_{x}) \leq R_0 \right\}
$$
and let $I := \{ n_1 < n_2 < \cdots\} \subset I_0$ be a maximal $k$-separated set, i.e., $I_0$ is a maximal set such that $|n_i - n_j| \ge k$ for all distinct $n_i, n_j \in I_0$. By maximality, 
$$
I_0 \subset \bigcup_{ i \in I} (i-k, i+k]
$$
and hence for all $j \in \Nb$,
$$
\# (I_0 \cap [0,n_j]) \leq 2k \cdot \# (I \cap [0,n_j]).
$$
 Thus by Equation~\eqref{eqn:defn of c},
\begin{equation} \label{eqn:distribution of I}
\frac{c}{2k} \leq \liminf_{j \rightarrow \infty} \frac{1}{n_j} \# (I \cap [0,n_j]).  
\end{equation}

For each $j \in \Nb$, fix $t_j > 0$ such that  
$$
\dist_X(g_1 \cdots g_{n_j} o, \sigma_{x}(t_j)) \leq R_0.
$$
 Then $t_j \rightarrow +\infty$ and hence we can fix a subsequence $t_{j_i}$ such that 
 $$
 t_j < t_{j_i}
 $$
 for all $j < j_i$. 
 
Notice that if $j < j'$, then 
\begin{align*}
\abs{t_j - t_{j'}} & = \dist_X(\sigma_{x}(t_j), \sigma_{x}(t_{j'})) \geq -2R_0 + \dist_X(g_1 \cdots g_{n_j} o, g_1 \cdots g_{n_{j'}} o) \\
& =  -2R_0 + \dist_X(o, g_{n_j + 1} \cdots g_{n_{j'}} o) > 2R_0
\end{align*} 
since $S^{n_j}(\mathbf{g}) = (g_{n_j + 1}, g_{n_j+2}, \cdots) \in A_k$ and $n_{j'} - n_j \ge k$. Thus
$$
[t_j - R_0, t_j+R_0] \cap [t_{j'} - R_0, t_{j'}+R_0] = \emptyset.
$$
Then since 
$$
\dist_X(g_1 \cdots g_{n_j} o, \sigma_{x}(t)) \leq 2R_0
$$
when $t \in [t_j - R_0, t_j+R_0] $, we have for all $i \in \Nb$ that
\begin{align*}
\int_{-R_0}^{t_{j_i}+R_0} \mathbf{1}_{ \Gamma \cdot B_{2R_0}(o)}( \sigma_{x}(t)) dt \geq  \sum_{j=1}^{j_i} \int_{t_j-R_0}^{t_j+R_0} dt =2R_0 \cdot j_i = 2R_0 \cdot \# (I \cap [0,n_{j_i}]).
\end{align*} 
Since 
$$
\ell(\mathsf{m}) = \lim_{i \rightarrow \infty} \frac{1}{n_{j_i}} \dist_X(o, g_1 \cdots g_{n_{j_i}} o) = \lim_{i \rightarrow \infty} \frac{1}{n_{j_i}} t_{j_i},
$$
then by Equation~\eqref{eqn:distribution of I},
$$
\begin{aligned}
\liminf_{i \rightarrow \infty} \frac{1}{t_{j_i}+R_0}\int_0^{t_{j_i}+R_0} \mathbf{1}_{ \Gamma \cdot B_{2R_0}(o)}( \sigma_{x}(t)) dt & \geq  \liminf_{i \rightarrow \infty} \frac{2R_0}{\ell(\mathsf{m}) n_{j_i}}  \# (I \cap [0,n_{j_i}]) \\
&  \geq \frac{cR_0}{ \ell(\mathsf{m}) k}.
\end{aligned} 
$$
Therefore, the proposition holds for any $R > 2R_0$.
\end{proof}

\bibliographystyle{alpha}
\bibliography{geom}

\end{document}